\DeclareSymbolFontAlphabet{\mathbb}{AMSb}
\DeclareSymbolFontAlphabet{\mathbbl}{bbold}
\definecolor{darkred}{rgb}{0.7,0,0} 
\newcommand{\defn}[1]{{\color{darkred}\emph{#1}}} 
\numberwithin{equation}{section}
\theoremstyle{definition}
\newtheorem* {theorem*}{Theorem}
\newtheorem* {proposition*}{Proposition}
\newtheorem* {corollary*}{Corollary}
\newtheorem* {conjecture*}{Conjecture}
\newtheorem{theorem}{Theorem}[section]
\theoremstyle{definition}
\newtheorem* {example*}{Example}
\newtheorem{lemma}[theorem]{Lemma}
\theoremstyle{definition}
\newtheorem{definition}[theorem]{Definition}
\theoremstyle{definition}
\newtheorem{proposition}[theorem]{Proposition}
\newtheorem{corollary}[theorem]{Corollary}
\newtheorem*{remark*}{Remark}
\theoremstyle{definition}
\newtheorem {example}[theorem]{Example}
\theoremstyle{definition}
\theoremstyle{definition}
\theoremstyle{definition}
\def\({\left(}
\def\){\right)}
\newcommand{\cR}{\mathcal{R}}
\def\ZZ{\mathbb{Z}}
\def\barr{\begin{array}}
\def\earr{\end{array}}
\def\ba{\begin{aligned}}
\def\ea{\end{aligned}}
\def\be{\begin{equation}}
\def\ee{\end{equation}}
\def\quand{\quad\text{and}\quad}
\def\I{\mathcal{I}}
\def\cH{\mathcal H}
\def\DesR{\mathrm{Des}_R}
\def\hs{\hspace{0.5mm}}
\def\id{\mathrm{id}}
\def\ben{\begin{enumerate}}
\def\een{\end{enumerate}}
\def\bei{\begin{itemize}}
\def\eei{\end{itemize}}
\def\hs{\hspace{0.5mm}}
\newcommand{\cB}{\mathcal{B}}
\def\path{\mathsf{path}}
\def\dash{\text{ --- }}
\def\whSym
\def\whQSym
\def\NN{\mathbb{N}}
\def\Adstar{\mathrm{Ad}^*}
\def\iHH{\mathcal{H}_{\mathsf{inv},*}}
\def\iH{\mathcal{H}^{\mathsf{red}}_{\mathsf{inv},*}}
\def\cHinvred{\mathcal{H}^{\mathsf{red}}_{\mathsf{inv}}}
\def\iR{\mathcal{R}_{\mathsf{inv},*}}
\def\tS{\tilde S}
\def\equivA{\approx_{\mathsf{A}}}
\def\simA{\sim_{\mathsf{A}}}
\def\athreediagram{
\begin{tikzpicture}
\node[label=below:$a$,inner sep=0mm] (a) at (-2,0) {
$\circ$
};
  \node[label=below:$b$,inner sep=0mm] (b) at (-1,0) {
$\circ$
};
  \node[label=below:$c$,inner sep=0mm] (c) at (0,0) {
$\circ$
};
\draw
(a)   --  (b)
(b) -- (c)
;
\end{tikzpicture}
}
\def\bthreediagram{
\begin{tikzpicture}
\node[label=below:$a$,inner sep=0mm] (a) at (-2,0) {
$\circ$
};
  \node[label=below:$b$,inner sep=0mm] (b) at (-1,0) {
$\circ$
};
  \node[label=below:$c$,inner sep=0mm] (c) at (0,0) {
$\circ$
};
\draw[double] (a) -- node [anchor=south] {4} (b);
\draw
(b)   --  (c) 
;
\end{tikzpicture}
}
\def\hthreediagram{
\begin{tikzpicture}
\node[label=below:$a$,inner sep=0mm] (a) at (-2,0) {
$\circ$
};
  \node[label=below:$b$,inner sep=0mm] (b) at (-1,0) {
$\circ$
};
  \node[label=below:$c$,inner sep=0mm] (c) at (0,0) {
$\circ$
};
\draw[double] (a) --  node [anchor=south] {5} (b);
\draw
(b)   --  (c) 
;
\end{tikzpicture}
}
\def\dfourdiagram{
\begin{tikzpicture}
\node[label=below:$a$,inner sep=0mm] (a) at (-2,0) {
$\circ$
};
  \node[label=below:$c$,inner sep=0mm] (b) at (-1,0) {
$\circ$
};
  \node[label=above:$b$,inner sep=0mm] (c) at (-1,0.8) {
$\circ$
};
  \node[label=below:$d$,inner sep=0mm] (d) at (0,0) {
$\circ$
};
\draw
(a)   --  (b) 
(b) -- (c)
(b) -- (d)
;
\end{tikzpicture}
}
\begin{document}
\title{Extending a word property for twisted Coxeter systems}
\author{Eric Marberg}
\date{}

\maketitle

\begin{abstract}
 We  prove two extensions of  Hansson and Hultman's word property for
 certain analogues of reduced words associated to twisted involutions in Coxeter groups.
 Our first extension concerns the superset of such words in which terms with a natural commutativity property may be 
 optionally primed. Our other extension involves variants of these words in which 
a defining minimal length condition is relaxed. In type $\mathsf{A}$ the sets considered are
closely related to generating functions for Schur $Q$-functions and $K$-theoretic Schur $P$-functions.
\end{abstract}

\setcounter{tocdepth}{2}

\section{Introduction}

Let $(W,S)$ be a Coxeter system.
A \defn{reduced word} for an element $w \in W$
is a minimal length sequence $(s_1,s_2,\dots,s_n)$ with $s_i \in S$ and $w=s_1s_2\cdots s_n$.
We write $\cR(w)$ for the set of all reduced words for $w$.

For any $s,t \in S$ let $m(s,t)$ denote the order of the product $st \in W$.
For each $s,t \in S$ with $2 \leq m(s,t) < \infty$ there is an associated  \defn{braid relation} 
on finite sequences of simple generators, which we write as
\be\label{braid-eq} (\dash ,\underbrace{s, t, s, t, \dots}_{m(s,t)\text{ factors}},\dash) \sim( \dash,\underbrace{ t ,s, t,s, \dots}_{m(s,t)\text{ factors}}, \dash). \ee
Here and in similar expressions, the corresponding symbols ``$\dash$'' 
on either side of $\sim$ are required to mask identical subsequences.
It is a well-known result of Matsumoto \cite{Matsumoto} and Tits \cite{Tits} that 
the braid relations \eqref{braid-eq} span and preserve each set $\cR(w)$ for $w \in W$; see \cite[Thm 3.3.1]{CCG} for a proof.
This is sometimes called the \defn{word property} for Coxeter groups.

This article is concerned with similar word properties for variants of the following construction.
Let $w \mapsto w^*$ be a self-inverse group automorphism 
of $W$ that preserves $S$, that is, an involution of the associated Coxeter graph.
We refer to  $(W,S,*)$ as a \defn{twisted Coxeter system}.
Suppose $a = (s_1, s_2\cdots, s_n)$ is a reduced word for an element of $W$.  
There is a unique subword $(s_{i_1}, s_{i_2}, \dots, s_{i_m})$ of maximal length
such that 
$\hat a:=(s_{i_m}^*, \dots, s_{i_2}^*, s_{i_1}^*, s_1,s_2,\dots, s_n)$ is also a reduced word.

One can show that $\hat a$ is always a reduced word for an element of the set of
 \defn{twisted involutions}
$
 \I_*(W) := \{ w\in W : w^{-1} = w^*\}.
 $
The sequence $a$ is defined to be an \defn{involution word} for $z \in \I_*(W)$
if $a$ is of minimal length such that $\hat a \in \cR(z)$.
Let $\iR(z)$ be the set of involution words for $z$.
Figure~\ref{twisted_invol4321-fig}
shows an example of this set when $W$ is a finite symmetric group and $* \neq \id$.
We will review some more constructive definitions of $\iR(z)$
in Section~\ref{prelim-sect}.\footnote{An interesting but even less constructive definition, which holds whenever $W$ is finite and is conjectured in general \cite[Conj. 4.2]{HMP2},
is that $\iR(z)$ consists of the reduced words for all minimal length elements $w \in W$ satisfying $w^*z \leq w$ in strong Bruhat order.}

Involution words have been studied previously 
in a few different forms.
In special cases they correspond to maximal chains in the weak order posets discussed in \cite{Brion2001,CJW}.
They are the same (though sometimes written in the opposite order)
as the \defn{reduced $\underline S$-expressions} in \cite{HanssonHultman,Hultman2,Hultman3},  \defn{reduced $I_*$-expressions} in \cite{HuZhang1,HuZhang2,HuZhang3}, and  \defn{admissible sequences} in \cite{RichSpring}.

The braid relations \eqref{braid-eq} preserve but usually do not span  the set  $\iR(z)$.
For example, suppose $s,t \in S$ are fixed by $*$ with $2<m(s,t) < \infty$.
Then the $1 + \lfloor \tfrac{1}{2}m(s,t)\rfloor$ element sequences $a=(s,t,s,\dots)$ and $b=(t,s,t,\dots)$ 
are both involution words for the longest element of the finite dihedral subgroup $\langle s,t\rangle$,
despite not being connected by any braid relations. Moreover, if an involution word 
begins with $a$ then replacing 
this initial subword with $b$ produces another involution word for the same element of $\I_*(W)$.

Hu and Zhang show in \cite{HuZhang1} that these \defn{half-braid relations} plus the usual braid relations are sufficient to span $\iR(z)$
in type $\mathsf{A}$ when $*$ is the identity map.
Hansson and Hultman \cite{HanssonHultman} extend this result to arbitrary twisted Coxeter systems as follows. 

For each pair of involution words 
for the longest element of a finite $*$-invariant parabolic subgroup of $W$,
there is a corresponding \defn{initial relation}
preserving $\iR(z)$.
Adding all such relations to the usual braid relations generates a relation spanning every set $\iR(z)$.
However, this includes many extraneous relations. In fact, Hansson and Hultman show in \cite{HanssonHultman} that 
it is only necessary to add initial relations derived from finite $*$-invariant parabolic subgroups of types $\mathsf{A}_3$, $\mathsf{BC}_3$, $\mathsf{D}_4$,
$\mathsf{H}_3$, and $\mathsf{I}_2(n)$. These are precisely the finite Coxeter systems  for which the complement of the Coxeter graph is disconnected.
For the precise statement of Hansson and Hultman's word property, see Section~\ref{prelim-sect}.

In this article we are interested in two generalizations of  $\iR(z)$.
First, we study the set of \defn{primed involution words} $\iR^+(z)$, which may be described as follows.
Above, we associated to each reduced word $a = (s_1, s_2,\dots, s_n)$
a ``doubled'' reduced word of the form
$\hat a =(s_{i_m}^*, \dots, s_{i_2}^*, s_{i_1}^*, s_1,s_2,\dots, s_n)$.
We refer to the indices in $\{1,2,\dots,n\} \setminus \{ i_1,i_2,\dots, i_m\}$ as the \defn{commutations} in $a$.
Each element of $\iR^+(z)$ consists of an involution word for $z$ paired with an arbitrary set of its commutations;
we think of this object as a sequence formed by adding primes to certain letters in an involution word.

Next, we examine the set of \defn{(reduced) involution Hecke words} $\iH(z)$.
This is the set of reduced words for all elements $w \in W$ satisfying $(w^{-1})^* \circ w = z$,
where $\circ : W \times W \to W$ is the \defn{Demazure product} defined in Section~\ref{prelim-sect}.
For examples of 
$\iR^+(z)$ and $\iH(z)$ see Figures~\ref{primed_invol4321-fig} and \ref{hecke4321-fig}.

Our main results, Theorems~\ref{hh-thm2} and \ref{hecke-thm}, give word properties for $\iR^+(z)$ and $\iH(z)$
when $(W,S,*)$ is an arbitrary twisted Coxeter system. The form of both theorems is very similar to the main result of 
Hansson and Hultman \cite{HanssonHultman}. In addition to a set of relevant substitutes for the braid relations \eqref{braid-eq},
to get a spanning relation
one must add certain exceptional relations corresponding to each finite $*$-invariant parabolic subgroups of type $\mathsf{A}_3$, $\mathsf{BC}_3$, $\mathsf{D}_4$,
$\mathsf{H}_3$, or $\mathsf{I}_2(n)$.
However, some work is required to extend the proofs in \cite{HanssonHultman} to our cases of interest.

To explain the motivation for these results, we specialize to type $\mathsf{A}$ with $*=\id$.
Then reduced words and involution words may be identified with positive integer sequences, 
while primed involution words 
become sequences of elements from the set $\{1'<1<2'<2<\dots\}$.

From an enumerative perspective, passing from involution words to primed involution words
is a fairly trivial extension, which just accounts for an extra power of two factor appearing in some 
generalizations of \defn{Schubert polynomials} studied in \cite{HMP4,WyserYong}.
The dynamics of the relations connecting all words in $\iR^+(z)$ for $z =z^{-1} \in S_n$, however, 
turn out to be much more complicated than for the relations connecting $\iR(z)$.

The article \cite{Marberg2022} and its sequel \cite{MT2021} construct certain \defn{crystals} on the sets of increasing factorizations of words in 
$\iR(z)$ and $\iR^+(z)$, respectively. The crystal operators for these structures are composed of the relations 
in Theorems~\ref{hh-thm2}, and some proofs in \cite{MT2021} rely on the results in this article. 

The crystals based on $\iR(z)$ have characters that are sums of \defn{Schur $P$-polynomials} $P_\lambda$
while the crystals based on $\iR^+(z)$ have characters that are sums of \defn{Schur $Q$-polynomials} $Q_\lambda$.
Although there is a simple identity $Q_\lambda = 2^{\ell(\lambda)} P_\lambda$ relating these functions,
there is no easy way of deducing the main theorems about the second family of crystals from the first (such as the fact that 
up to isomorphism
they are 
closed under tensor products).

%

The rest of this article is organized as follows. Section~\ref{prelim-sect} reviews some
preliminaries while Section~\ref{some-sect} contains a couple of general results about involution words.
Our extensions of Hansson and Hultman's word property appear in Sections~\ref{rel-sect1} and \ref{rel-sect2}.
Sections~\ref{app-sect1} and \ref{app-sect2} discuss a few applications of these results.

\subsection*{Acknowledgments}

This work was partially supported by Hong Kong RGC grants ECS 26305218 and GRF 16306120.

\section{Preliminaries}\label{prelim-sect}

For the duration of the article $(W,S,*)$ denotes a twisted Coxeter system with length function $\ell : W \to \NN$.
%
%

There is a unique associative operation $\circ : W \times W \to W$, often called the \defn{Demazure product}, satisfying
$v \circ w = vw$ for all $v, w \in W$ with $\ell(vw) = \ell(v) + \ell(w)$
and $s\circ s =s $ for all $s \in S$. 
One way to derive this is to set $a_s=1$ and $b_s=0$ in \cite[Thm. 7.1]{Humphreys}
and then notice that $\{T_w : w \in W\}$ is a monoid under multiplication; alternatively, see 
the discussion in \cite[\S3.10]{RichSpring}.


In terms of the $\circ$ operation, the set of reduced words $\cR(w)$ for $w \in W$ consists of the
minimal length sequences $(s_1,s_2,\dots,s_n)$ with $s_i \in S$ and $w=s_1\circ s_2 \circ \cdots\circ s_n$.
An analogous way of defining 
an \defn{involution word} for $z \in W$ is as a minimal length sequence $(s_1,s_2,\dots,s_n)$ with $s_i \in S$ and 
\be\label{circcirc}
z = s_n^* \circ \cdots \circ s_2^* \circ s_1^*  \circ s_1 \circ s_2 \circ \cdots \circ s_n.
\ee
This definition is equivalent to the one in the introduction.
As $\circ$ is associative with $(u\circ v)^* = u^* \circ v^*$ and $(u\circ v)^{-1} = v^{-1} \circ u^{-1}$,
an involution word for $z$ is just a reduced word for a minimal length element $w \in W$ with 
$z = (w^{-1})^* \circ w.$
If $z$ is in the set of twisted involutions
$\I_*(W) := \{ w\in W : w^{-1} = w^*\}$
then
\be\label{oo-eq}
 s^* \circ z \circ s = \begin{cases}  
z &\text{if $\ell(z) > \ell(zs)$} \\
zs &\text{if $\ell(z) < \ell(zs)$ and $zs=s^*z$} \\
s^*zs&\text{if $\ell(z) < \ell(zs)$ and $zs\neq s^*z$}
\end{cases}
\ee
for all $s \in S$
by  \cite[Lem. 3.4]{Hultman2}. It follows 
that
$\I_*(W) = \left\{ (w^{-1})^* \circ w : w \in W\right\}$
so $z \in W$ has an involution word if and only if $z \in \I_*(W)$.
We continue to write $\iR(z)$ for the set of all involution words for $z \in \I_*(W)$; see
Figure~\ref{twisted_invol4321-fig} for an example.

Sometimes another equivalent definition of $\iR(z)$ is used.
Define a set of underlined symbols $\underline S := \{ \underline s : s \in S\}$.
There is a unique right action of the free monoid on $\underline S$ on the set $\I_*(W)$ satisfying
\be\label{uaction-eq} z \underline s = \begin{cases} zs &\text{if }zs=s^*z \\ s^* zs&\text{otherwise}\end{cases}\ee
for $z \in \I_*(W)$ and $ s \in S$ \cite[Def. 2.1]{HanssonHultman}.
For this action, one has $z \underline s\hs \underline s = z$.
One can show that
the involution words for $z \in \I_*(W)$
are the minimal length sequences $(s_1,s_2,\dots,s_n)$ with $s_i \in S$
and $z = 1 \underline s_1 \underline s_2 \cdots \underline s_n$ \cite[Cor. 2.6]{HMP2}.
This means involution words are the same as \defn{reduced $\underline S$-expressions} in \cite{HanssonHultman,Hultman2,Hultman3}. 

We mention two other properties of these words,
which we will often use implicitly. 
Fix $s \in S$ and $z \in \I_*(W)$. Then $z$ has an involution word ending in $s$ if and only if $\ell(zs) < \ell(z)$
\cite[Lem. 3.8]{Hultman2}.
It also follows from \cite[Lem. 3.8]{Hultman2} that
a sequence satisfying \eqref{circcirc} belongs to $\iR(z)$ if and only if the elements
\[ s_1^*  \circ s_1,\quad s_2^* \circ s_1^*  \circ s_1 \circ s_2, \quad s_3^* \circ s_2^* \circ s_1^*  \circ s_1 \circ s_2\circ s_3, \quad\dots\] are all distinct, in which case  $
s_i^* \circ \cdots  \circ s_1^*  \circ s_1\circ \cdots s_i =
1\underline s_1 \underline s_2  \cdots\underline s_i$ for all $i $.

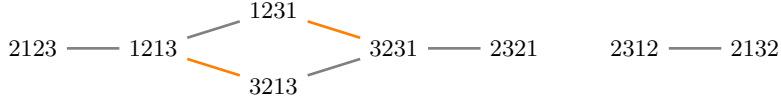
\begin{figure}[h]
\begin{center}
\begin{tikzpicture}[>=latex,line join=bevel,xscale=1.6]
  \pgfsetlinewidth{1bp}
  \small
    \pgfsetcolor{gray}
\node (2123) at (0,0) {$2123$};
\node (1213) at (1,0) {$1213$};
\node (1231) at (2,0.5) {$1231$};
\node (3213) at (2,-0.5) {$3213$};
\node (3231) at (3,0) {$3231$};
\node (2321) at (4,0) {$2321$};
\node (2312) at (5,0) {$2312$};
\node (2132) at (6,0) {$2132$};
\draw (2123) -- (1213) -- (1231);
\draw (3231) -- (2321);
\draw (2312) -- (2132);
\draw [color=orange] (1231) -- (3231);
\draw [color=orange] (1213) -- (3213);
\draw (3213) -- (3231);
\end{tikzpicture}
\end{center}
\caption{Involution words for the longest element $z=(1,4)(2,3)$ in the symmetric group $S_4$ relative to the unique Coxeter automorphism $*\neq \id$. 
Each expression $abcd$ stands for $(s_a,s_b,s_c,s_d)$ where $s_i := (i,i+1) \in S_4$ and $s_i^* = s_{4-i}$.
The grey edges show all braid relations between these words while the colored edges show all
\defn{half-braid relations} in the sense of Examples~\ref{half-braid} and \ref{half-braid2}.}\label{twisted_invol4321-fig}
\end{figure}

\section{Hansson and Hultman's relations}\label{hh-rel-sect}

%
%

The definition above shows that $\iR(z)$ is a union of sets of the form $\cR(w)$ for certain elements $w \in W$.
Thus the braid relations \eqref{braid-eq} always 
preserve but typically do not span the set $\iR(z)$.
Hansson and Hultman show in \cite{HanssonHultman} that one can connect $\iR(z)$ by adding certain relations of the following kind.

\begin{definition}\label{init-rel-def}
Choose  $J \subset S$ with $J=J^*$ such that $W_J := \langle J \rangle$ is finite. Suppose $(s_1,s_2,\dots,s_n)$ and $(t_1,t_2,\dots,t_n)$ are involution words for the longest element  $w_0^J\in W_J$. We refer to any relation of the form
\be\label{init-rel-eq}(s_1,s_2,\dots,s_n,\dash) \sim  (t_1,t_2,\dots,t_n,\dash) \ee
as an \defn{initial relation}, whose
 \defn{type} is the isomorphism class of $(W_J,J,*)$.
\end{definition}

Unlike with the ordinary braid relations, words connected by initial relations of the form \eqref{init-rel-eq}
can only differ in their first $n$ letters.

With one exception, we will only need to name the type of $(W_J,J,*)$ when $W_J$ is finite and irreducible.
In this case 
we denote the isomorphism class of $(W_J,J)$  
either by $\mathsf{X}_n$
where $\mathsf{X} \in \{\text{$\mathsf{A}$, $\mathsf{BC}$, $\mathsf{D}$, $\mathsf{E}$, $\mathsf{F}$, $\mathsf{G}$, $\mathsf{H}$}\}$
and $|J|=n$, or by $\mathsf{I}_2(n)$ in the case when $|J| =2$ and $|W_J| = 2n$. 
We use the same symbol to indicate the type of $(W_J,J,*)$ when $*=\id$.

\begin{example}\label{half-braid}
The twisted subsystem $(W_J,J,*)$ has type $ \mathsf{I}_2(n)$ if $J = \{s,t\}$, $s^*=s\neq t^*=t$, and $m(s,t)=n$.
When $n<\infty$ there is one initial relation
\be\label{i2-eq1} (\underbrace{s,t,s,t,s,t,s,\dots}_{1+\lfloor n/2\rfloor \text{ factors}},\dash) \sim (\underbrace{t,s,t,s,t,s,t,\dots}_{1+\lfloor n/2\rfloor \text{ factors}},\dash).
\ee
This relation can be ignored when  $n=2$, which is the unique case when $(W_J,J)$ is reducible, since then it coincides with an ordinary braid relation.
\end{example}

We only require names for two types of systems $(W_J,J,*)$ with $*\neq \id$:

\begin{example}\label{half-braid2}
The twisted subsystem $(W_J,J,*)$ has type $^2 \mathsf{I}_2(n)$ if $J = \{s,t\}$, $s^*=t \neq t^*=s$, and $m(s,t) =n$.
When $n<\infty$ there is one initial relation
\be\label{i2-eq2} (\underbrace{s,t,s,t,s,t,\dots}_{\lceil n/2\rceil \text{ factors}},\dash) \sim (\underbrace{t,s,t,s,t,s,\dots}_{\lceil n/2\rceil \text{ factors}},\dash).
\ee
This relation is meaningful even when $n=2$, as then it lets us replace the single letter $s$ by $t$ at the beginning of a word.
\end{example}

Following \cite{HanssonHultman}, we refer to \eqref{i2-eq1} and \eqref{i2-eq2} as \defn{half-braid relations}.

\begin{example} The twisted subsystem $(W_J,J,*)$ has type $^2\mathsf{A}_{n}$ if
we can write $J = \{s_1,s_2,\dots,s_{n}\}$ where $s_i^* = s_{n+1-i}$ for all $i $
 and where $m(s_i,s_j)$ is $3$ if $|i-j| = 1$ or $2$ if $|i-j|>1$.
There are multiple initial relations of this type; in rank $n=3$ one such relation is $ (s_2,s_3,s_1,s_2,\dash) \sim (s_2,s_3,s_2,s_1,\dash).$
\end{example}

The following theorem   extends earlier case-by-case results in \cite{HuZhang1,HuZhang2,HuZhang3,Marberg2017}.

\begin{theorem}[{\cite[Thm. 1.2]{HanssonHultman}}]
\label{hh-thm}
Let $z \in \I_*(W)$. Then $\iR(z)$ is an equivalence class under
the transitive closure of the braid relations for $(W,S)$ plus all initial relations
of type $^2\mathsf{A}_3$, $\mathsf{BC}_3$, $\mathsf{D}_4$, $\mathsf{H}_3$, $\mathsf{I}_2(n)$, or $^2\mathsf{I}_2(n)$ for $2\leq n < \infty$.
\end{theorem}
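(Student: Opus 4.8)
The plan is to deduce Theorem~\ref{hh-thm} from the abstract spanning result which says that $\iR(z)$ is connected by the braid relations together with \emph{all} initial relations coming from finite $*$-invariant parabolics $W_J$, and then to argue that the only types of $J$ that actually contribute new, non-redundant relations are $^2\mathsf{A}_3$, $\mathsf{BC}_3$, $\mathsf{D}_4$, $\mathsf{H}_3$, $\mathsf{I}_2(n)$, and $^2\mathsf{I}_2(n)$. So first I would set up the large relation $\approx$ generated by the braid relations of $(W,S)$ plus every initial relation, and cite (or reprove via the $\underline S$-action in \eqref{uaction-eq}) the fact that this is exactly the relation whose classes are the sets $\iR(z)$: one direction is that every such relation preserves $\iR(z)$ — braid relations because $\iR(z)$ is a union of sets $\cR(w)$, initial relations because replacing an involution word for $w_0^J$ by another one does not change $1\underline s_1\cdots\underline s_n$ — and the other direction (spanning) is the heart of Hansson–Hultman and would be invoked.

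Next I would reduce the list of needed types. The key structural observation — the one Hansson and Hultman isolate — is that an initial relation attached to $W_J$ is redundant (i.e.\ already implied by braid relations plus initial relations on proper parabolics $W_{J'}$ with $J' \subsetneq J$) whenever the Coxeter graph of $(W_J,J)$ has \emph{connected complement}. Concretely: if the complement graph is connected, any two vertices $s,t\in J$ that are non-adjacent in the complement (hence adjacent, i.e.\ $m(s,t)\ge 3$, in the Coxeter graph) can be "bridged", and a reduced/involution word for $w_0^J$ can be transformed through words that at some point do not use all of $J$, letting induction on $|J|$ take over. The finite irreducible Coxeter systems whose Coxeter-graph complement is \emph{disconnected} are precisely $\mathsf{A}_3$, $\mathsf{BC}_3$, $\mathsf{D}_4$, $\mathsf{H}_3$, and $\mathsf{I}_2(n)$ (the path on $3$ vertices, the two labelled $3$-paths, the claw $\mathsf{D}_4$, and the single edge); in the twisted world these become $^2\mathsf{A}_3$, $\mathsf{BC}_3$ (with the order-$3$ automorphism distinguished—here $*$ acts trivially or not), $\mathsf{D}_4$, $\mathsf{H}_3$, $\mathsf{I}_2(n)$, and $^2\mathsf{I}_2(n)$, which is exactly the stated list. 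I would also note that reducible $W_J$ never contribute anything new: an initial relation for $W_{J_1}\times W_{J_2}$ factors through the ones for $W_{J_1}$ and $W_{J_2}$ together with commutations (braid relations with $m=2$), the only borderline case being $\mathsf{I}_2(2)$, which as observed in Example~\ref{half-braid} coincides with an ordinary braid relation and so is already present.

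Then the remaining work is the genuine \emph{reduction argument}: show by induction on $|J|$ that if $(W_J,J)$ has connected complement then every initial relation \eqref{init-rel-eq} of type $(W_J,J,*)$ lies in the transitive closure of braid relations plus initial relations coming from the six allowed families and from proper $*$-invariant parabolics. The mechanism is that the longest element $w_0^J$ admits involution words whose first letter can be chosen to be any prescribed $s\in J$ with $\ell(sw_0^J)<\ell(w_0^J)$ — which for $w_0^J$ is every $s\in J$ — and, using connectedness of the complement, one can move between any two involution words of $w_0^J$ by a sequence of moves each of which is "localized" to a proper parabolic or is one of the allowed rank-$3$ / dihedral relations; this is exactly the combinatorial lemma in \cite{HanssonHultman} and I would follow its proof, adapting the symmetric-group computations of \cite{HuZhang1,HuZhang2,HuZhang3,Marberg2017} where needed. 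The main obstacle — and the place where real case analysis is unavoidable — is precisely this last step for the rank-$3$ seeds: one must verify by hand (or by the graph-complement induction feeding on rank $\le 2$) that in each of $^2\mathsf{A}_3$, $\mathsf{BC}_3$, $\mathsf{D}_4$, $\mathsf{H}_3$ the prescribed generating set of relations really does connect all involution words of $w_0^J$, since these are exactly the base cases where connectedness of the complement fails and no further reduction is possible. Everything above rank $3$ is then formal induction, and the dihedral cases $\mathsf{I}_2(n)$, $^2\mathsf{I}_2(n)$ are the single half-braid relations written out in Examples~\ref{half-braid} and \ref{half-braid2}.
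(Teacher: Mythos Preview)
The paper does not prove this theorem at all: it is stated as a citation of \cite[Thm.~1.2]{HanssonHultman} and is used as a black box throughout (in particular in the proofs of Theorems~\ref{hh-thm2} and~\ref{hecke-thm}). There is no ``paper's own proof'' to compare against.

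Your proposal is therefore not so much a proof attempt for this paper as a sketch of the strategy of Hansson--Hultman's original argument, and as such it is broadly accurate: they do first establish spanning by the full family of initial relations and then carry out an inductive reduction on $|J|$, with the listed types arising precisely as the irreducible finite Coxeter systems whose Coxeter-graph complement is disconnected. Two small corrections to your sketch: the untwisted type $\mathsf{A}_3$ does \emph{not} appear in the final list (only $^2\mathsf{A}_3$ does), because when $*=\id$ on $\mathsf{A}_3$ the relevant initial relation turns out to be redundant, so the disconnected-complement criterion is necessary but not quite sufficient for a type to survive; and the actual spanning step (that all initial relations suffice) is itself nontrivial and is proved in \cite{HanssonHultman} rather than something one can simply ``cite or reprove'' from \eqref{uaction-eq} alone. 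If your goal is to supply a self-contained proof here, you would essentially be reproducing the content of \cite{HanssonHultman}, which the present paper deliberately avoids.
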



Hansson and Hultman also prove a more explicit form of this result with a minimal set of spanning relations \cite[Thm. 4.1]{HanssonHultman},
similar to our Theorem~\ref{hh-thm2}.


\section{Some general properties}\label{some-sect}

This section contains two general propositions that slightly refine 
the main technical lemma in \cite[\S3.1]{HanssonHultman}.

Fix $s,t \in S$ with $m(s,t)<\infty$ and let $\Delta = w_0^{\{s,t\}}   $ 
be the longest element of the finite dihedral subgroup $W_{\{s,t\}} = \langle s,t\rangle$.
Choose a map $\theta : \{s,t\} \to W$.
If $\theta(\{s,t\}) = \{s,t\} $ then $\theta$ extends to a Coxeter
involution of $W_{\{s,t\}}$
and we 
 define 
$m(s,t;\theta)$ to be the common length of all involution words in $\cR_{\mathsf{inv},\theta}(\Delta)$.
If $\theta(\{s,t\}) \neq \{s,t\}$ then we set $m(s,t;\theta) := \ell(\Delta)$ 
to be the common length of all reduced words in $\cR(\Delta)$.
More explicitly one has \cite[Prop. 7.7]{HMP2} 
\be\label{m-eq} m(s,t;\theta) := \begin{cases} 
\tfrac{1}{2} m(s,t)+\tfrac{1}{2}  & \text{if $m(s,t)$ is odd and $\theta(\{s,t\}) = \{s,t\}$}  \\
 \tfrac{1}{2} m(s,t) +1 & \text{if $m(s,t)$ is even, $\theta(s)=s$, and $\theta(t)=t$} \\
 \tfrac{1}{2} m(s,t) & \text{if $m(s,t)$ is even, $\theta(s) =t$, and $\theta(t)=s$} \\
 m(s,t) &\text{otherwise}.
\end{cases}
\ee
For convenience we also set $m(s,t;\theta) :=\infty$ if $s, t \in S$ and $m(s,t) =\infty$.

For $z \in W$ let $\Adstar_z : W \to W$ be the group automorphism $w \mapsto (zwz^{-1})^*$.
The formula \eqref{m-eq} has the following consequence.

\begin{corollary}
Let $s,t \in S$ and $z \in \I_*(W)$. Then $m(s,t;\Adstar_z ) \leq m(s,t)$,
with equality if and only if 
either $m(s,t) \in \{1,\infty\}$,
 $m(s,t) = 2$ and $zs \neq t^* z$, or
 $m(s,t)\in \{3,4,5,\dots\}$ and $\{ zs,zt\} \neq \{s^*z,t^*w\}$.
\end{corollary}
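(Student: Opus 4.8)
The plan is to reduce the statement to the explicit formula \eqref{m-eq} applied to the map $\theta\colon\{s,t\}\to W$ obtained by restricting $\Adstar_z$. The first step is to record a dictionary: for $r\in\{s,t\}$ and $u\in W$ one has $\Adstar_z(r)=u$ iff $zrz^{-1}=u^*$ iff $zr=u^*z$, so in particular $\Adstar_z(s)=s\iff zs=s^*z$ and $\Adstar_z(s)=t\iff zs=t^*z$, with the analogous equivalences for $t$. The second step is to observe that since $z\in\I_*(W)$ satisfies $z^*=z^{-1}$, the automorphism $\Adstar_z$ is an involution of $W$ (a one-line check: $(zwz^{-1})^*=z^{-1}w^*z$ because $(z^{-1})^*=(z^*)^{-1}=z$, hence $\Adstar_z$ applied twice is $\id$). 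Consequently $\Adstar_z(s)=t$ forces $\Adstar_z(t)=s$, i.e. $zs=t^*z\iff zt=s^*z$; and since $\Adstar_z$ is injective, $\theta(s)\neq\theta(t)$, so exactly one of three mutually exclusive cases holds: $\theta$ fixes both $s$ and $t$; $\theta$ swaps them; or $\theta(\{s,t\})\neq\{s,t\}$.

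With this in hand, the inequality $m(s,t;\Adstar_z)\leq m(s,t)$ is immediate: reading \eqref{m-eq} (together with the convention $\infty\leq\infty$), in every branch the value is one of $\tfrac12 m(s,t)+\tfrac12$, $\tfrac12 m(s,t)+1$, $\tfrac12 m(s,t)$, or $m(s,t)$, each of which is $\leq m(s,t)$. The real content is the equality clause, which I would settle by a short case split on $m:=m(s,t)$. For $m\in\{1,\infty\}$ equality holds unconditionally: when $m=\infty$ this is the convention, and when $m=1$ one has $s=t$, the ``swap'' case is vacuous, and \eqref{m-eq} returns $1$ in the remaining two branches. For $m=2$, formula \eqref{m-eq} gives equality except in the ``swap'' branch (there the value is $\tfrac12 m=1<m$, whereas ``fix both'' gives $\tfrac12 m+1=2=m$), and by the dictionary the ``swap'' branch occurs iff $zs=t^*z$; hence equality holds iff $zs\neq t^*z$. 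For $m\geq 3$, both the ``fix both'' and ``swap'' branches give values $<m$ (in the even subcase $\tfrac12 m+1<m$ uses $m>2$), so equality holds iff $\theta(\{s,t\})\neq\{s,t\}$; and $\theta(\{s,t\})=\{s,t\}$ iff $\{(zsz^{-1})^*,(ztz^{-1})^*\}=\{s,t\}$ iff (apply $*$, then right-multiply by $z$) $\{zs,zt\}=\{s^*z,t^*z\}$. Assembling the three cases gives precisely the stated trichotomy.

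I do not anticipate a genuine obstacle; the corollary is essentially bookkeeping on top of \eqref{m-eq}. The two points that require a little care are: first, checking that $\Adstar_z$ is an involution when $z\in\I_*(W)$ --- this is what guarantees that in the ``$\theta(\{s,t\})=\{s,t\}$'' situation $\theta$ really extends to a Coxeter involution of $W_{\{s,t\}}$ (so that \eqref{m-eq} applies) and what makes the conditions $\Adstar_z(s)=t$ and $\Adstar_z(t)=s$ equivalent, allowing the $m=2$ case to be phrased with the single inequality $zs\neq t^*z$; and second, separating the $m=2$ branch from $m\geq 3$, since for $m=2$ the value $\tfrac12 m+1$ equals $m$ and so the ``fix both'' case also yields equality.
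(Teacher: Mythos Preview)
Your proposal is correct and follows precisely the approach the paper intends: the corollary is stated immediately after \eqref{m-eq} with no proof beyond the phrase ``has the following consequence,'' and your case analysis on $m(s,t)$ together with the dictionary $\Adstar_z(r)=u\iff zr=u^*z$ is exactly the bookkeeping required. Your observation that $\Adstar_z$ is an involution when $z\in\I_*(W)$ (so that $zs=t^*z$ and $zt=s^*z$ are equivalent, allowing the $m=2$ clause to be stated with a single inequality) is the one nontrivial ingredient, and you have handled it correctly.
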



Denote the \defn{right descent set} of $w$ by $\DesR(w) := \{ s \in S : \ell(ws) < \ell(w)\}$.

\begin{proposition}\label{tprop1}
Suppose $s,t \in S$, $y \in \I_*(W)$, and $(r_1,r_2,\dots,r_k) \in \iR(y)$.
Let $n>0$ be an integer.
Then the words
\be\label{rwords-eq}
(r_1,r_2,\dots,r_k, \underbrace{\dots,t,s,t,s}_{n\text{ terms}}) \quand (r_1,r_2,\dots,r_k,\underbrace{\dots,s,t,s,t}_{n\text{ terms}})
\ee
both belong to $\iR(z)$ for some $z \in \I_*(W)$ 
if and only if
$\{s,t\}\cap \DesR(y)=\varnothing$ and $n=m(s,t;\Adstar_y)$, and in this case it holds that
 $m(s,t;\Adstar_y) = m(s,t;\Adstar_z)$.
\end{proposition}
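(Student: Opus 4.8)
The plan is to reduce the whole statement to an analysis of the right action \eqref{uaction-eq} of the free monoid on $\uS$ on the set $\I_*(W)$, following and refining the dihedral computation of \cite[\S3.1]{HanssonHultman}. Recall from Section~\ref{prelim-sect} that, given $(r_1,\dots,r_k)\in\iR(y)$, a one-letter extension $(r_1,\dots,r_k,p)$ with $p\in S$ lies in $\iR(y\underline p)$ if and only if $p\notin\DesR(y)$ (when $p\in\DesR(y)$, the element $y$ already has an involution word ending in $p$, so every involution word for $y\underline p$ has length $k-1$ and the length-$(k+1)$ sequence $(r_1,\dots,r_k,p)$ cannot be one); iterating, the extension of $(r_1,\dots,r_k)$ by an alternating word $(u_1,u_2,\dots,u_n)$ as in \eqref{rwords-eq} lies in $\iR(z)$, with $z:=y\underline{u_1}\cdots\underline{u_n}$, if and only if $u_{i+1}\notin\DesR(y\underline{u_1}\cdots\underline{u_i})$ for all $0\le i<n$. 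For every $n\ge1$ the two alternating words of \eqref{rwords-eq} begin with the two different elements of $\{s,t\}$, so both can be involution words for a common $n$ only if $\{s,t\}\cap\DesR(y)=\varnothing$; assume this henceforth.

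Set $\theta:=\Adstar_y|_{\{s,t\}}\colon\{s,t\}\to W$. By \eqref{uaction-eq}, for $p\in\{s,t\}$ one has $y\underline p=yp$ when $\theta(p)=p$ and $y\underline p=p^*yp$ otherwise, so the $\uS$-action along alternating $\{s,t\}$-words issuing from $y$ is governed by $\theta$ exactly as the corresponding action on $(W_{\{s,t\}},\{s,t\},\theta)$ is governed by $\theta$ in the case $\theta(\{s,t\})=\{s,t\}$. Re-running the argument of \cite[\S3.1]{HanssonHultman} — and separately treating the case $\theta(\{s,t\})\ne\{s,t\}$, where one checks by induction that $j<m(s,t)$ alternating $\{s,t\}$-moves from $y$ never produce $s$ or $t$ as a right descent while $m(s,t)$ moves produce both — the set of twisted involutions reachable from $y$ by $\{s,t\}$-moves, together with its $\uS$-structure, is isomorphic to the interval below the longest element $\Delta=w_0^{\{s,t\}}$ in the twisted weak order of the dihedral system on $\{s,t\}$ with twist $\theta$. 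In that model the two alternating words of common length $m(s,t;\theta)$ are precisely the involution words for $\Delta$; an alternating word of length $j$ with $1\le j<m(s,t;\theta)$ is an involution word but reaches the two distinct elements attached to the two choices of first letter; and an alternating word of length exceeding $m(s,t;\theta)$ is never an involution word. Transporting through the isomorphism, both words of \eqref{rwords-eq} lie in a common $\iR(z)$ if and only if $n=m(s,t;\theta)$, and then $z$ corresponds to $\Delta$; since \eqref{m-eq} identifies $m(s,t;\theta)$ with $m(s,t;\Adstar_y)$, this proves the asserted equivalence.

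It remains to establish $m(s,t;\Adstar_z)=m(s,t;\Adstar_y)$ in this case. The interval isomorphism is $\uS$-equivariant, and the local twist at a twisted involution $w$ is detected by the $\uS$-action through the equivalence $w\underline p=wp\iff\Adstar_w(p)=p$; hence $\Adstar_z|_{\{s,t\}}$ corresponds under the isomorphism to the ``local twist at $\Delta$'' inside $W_{\{s,t\}}$, namely the map $p\mapsto(\Delta p\Delta)^\theta$. Conjugation by $\Delta$ in the dihedral group $W_{\{s,t\}}$ is the identity when $m(s,t)$ is even (then $\Delta$ is central) and the swap $s\leftrightarrow t$ when $m(s,t)$ is odd, so the local twist at $\Delta$ equals $\theta$ when $m(s,t)$ is even and equals $\theta$ composed with the swap $s\leftrightarrow t$ when $m(s,t)$ is odd. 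In the even case the twist is literally $\theta$, so \eqref{m-eq} returns the same value; in the odd case \eqref{m-eq} depends only on whether the twist preserves $\{s,t\}$, a property unaffected by precomposition with the swap, so again the value is unchanged. Thus $m(s,t;\Adstar_z)=m(s,t;\theta)=m(s,t;\Adstar_y)$. The degenerate possibilities $m(s,t)\in\{1,\infty\}$ (forcing $n=1$) and $m(s,t)=2$ are subsumed in this parity analysis or verified directly.

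I expect the main obstacle to be the case $\theta(\{s,t\})\ne\{s,t\}$, in which $\theta$ is not a Coxeter automorphism of $W_{\{s,t\}}$ and \cite[\S3.1]{HanssonHultman} does not literally apply: one must show directly that the alternating $\{s,t\}$-moves from $y$ yield involution words for exactly $m(s,t)$ steps and then stop, which is precisely the source of the ``otherwise'' clause in \eqref{m-eq}. A secondary difficulty is making the transport of the local-twist computation from the dihedral model back into $W$ fully rigorous: since $z$ need not equal $y\Delta$ (Demazure collapsing can and does occur, already for $W=S_4$), the argument should be phrased intrinsically through right descents and the $\uS$-action rather than through explicit products in $W$.
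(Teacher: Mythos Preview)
Your dihedral-model reduction is sound when $\theta(\{s,t\})=\{s,t\}$, and you correctly flag the remaining cases as the crux. But your sketch for $\theta(\{s,t\})\not\subseteq\{s,t\}$ is not merely incomplete --- the stated induction is wrong in the mixed subcase. If, say, $\theta(s)=s$ while $\theta(t)\notin\{s,t\}$, then along the path beginning with $s$ the very first move $y\mapsto y\underline s=ys$ already makes $s$ a right descent, so the claim that ``$j<m(s,t)$ alternating moves never produce $s$ or $t$ as a right descent'' fails at $j=1$. The two alternating paths behave asymmetrically here (one has a commutation at the first step, the other does not), and a step-by-step induction on the $\uS$-action does not organize cleanly. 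There is also no dihedral model into which to transport the local-twist computation, so your argument for $m(s,t;\Adstar_z)=m(s,t;\Adstar_y)$ has nothing to stand on in these cases.

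The paper bypasses the step-by-step action entirely. Since $\{s,t\}\cap\DesR(y)=\varnothing$ forces $\ell(y\Delta)=\ell(y)+m(s,t)$, the exchange condition determines which of $s^*,t^*$ are \emph{left} descents of $y\Delta$: precisely those whose image under $\theta$ lies in $\{s,t\}$. This yields four cases, and in each $z$ is written down explicitly --- $y\Delta$, $\Delta^*y\Delta$, or $\Delta^*ys\Delta$ (and its mirror) --- after which one checks directly that $\Adstar_z(\{s,t\})$ stays in the same clause of \eqref{m-eq} as $\Adstar_y(\{s,t\})$. No abstract transport is needed; in particular your worry that ``$z$ need not equal $y\Delta$'' is resolved by simply computing what $z$ is. For the converse the paper also argues differently and more cleanly than you propose: rather than showing that overlength alternating words fail to be involution words, it observes that if both words of length $n$ lie in $\iR(z)$ while both words of length $m(s,t;\Adstar_y)$ lie in some $\iR(z')$, then one pair is a prefix of the other, and swapping those prefixes manufactures an involution word for $z$ with two equal adjacent letters --- impossible.
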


\begin{proof}
Let $w \in W$. 
When $m(s,t) <\infty$, set $\Delta := w_0^{\{s,t\}} $ as above.
It is well-known that if $\{s,t\} \subset \DesR(w)$ then $m(s,t)<\infty$
and $\ell(w\Delta) = \ell(w) - \ell(\Delta)$,
while if $\{s,t\}\cap \DesR(w)=\varnothing$ and $m(s,t) <\infty$, then $\ell(w\Delta) = \ell(w)  + \ell(\Delta)$ \cite[Lem. 1.2.1]{GP}.
Taking inverses gives similar left-handed properties.

Assume that  $\{s,t\} \cap \DesR(y) = \varnothing$ and $n=m(s,t;\Adstar_y)$ so that
$m(s,t) <\infty$. We first argue that the words in \eqref{rwords-eq} belong to $\iR(z)$ for some $z \in \I_*(W)$.
Let $\theta = \Adstar_y$ and note that $yw = \theta(w)^* y$. If $\theta(\{s,t\}) = \{s,t\}$,
then by using the observations in the previous paragraph one can check that 
$y \Delta \in \I_*(W)$ and that 
both words in \eqref{rwords-eq} belong to $\iR(y \Delta)$.
In this case, since conjugation by $\Delta$ preserves $\{s,t\}$ and since $\Delta$ is central in $W_{\{s,t\}}$
when $m(s,t)$ is even, it follows that $m(s,t;\Adstar_z) = m(s,t;\theta)$ for $z=y\Delta$.

The exchange condition implies that $s^*$ (respectively, $t^*$) is a left descent of $y\Delta$ if and 
only if $\theta(s)$ (respectively, $\theta(t)$) belongs to $\{s,t\}$.
Therefore if $\theta(\{s,t\})$ and  $\{s,t\}$ are disjoint
then $\Delta^* y \Delta \in \I_*(W)$ and both words in \eqref{rwords-eq} belong to $\iR(\Delta^* y \Delta)$.
In this case 
if $z=\Delta^* y \Delta$ then
 $\Adstar_z(\{s,t\}) = \{ \Delta \theta(s) \Delta, \Delta \theta(t)\Delta\}$ 
must also be disjoint from $\{s,t\}$,
so $m(s,t;\Adstar_{z}) = m(s,t;\theta) = m(s,t)$.

By similar reasoning,
if $\theta(s)=s$ and $\theta(t) \notin\{s, t\}$ then neither $s^*$ nor $t^*$ is a left descent of $ys\Delta$.
It follows that both words in \eqref{rwords-eq} belong to $\iR(z)$ for $z := \Delta^* ys\Delta \in \I_*(W)$
and that $\Adstar_z(t) \notin \{s,t\}$, so $m(s,t;\Adstar_{z}) = m(s,t;\theta) = m(s,t)$.
The final case when $\theta(s)\notin\{ s,t\}$ and $\theta(t) = t$ is handled by a symmetric argument.


Now assume both words in \eqref{rwords-eq} belong to $\iR(z)$ for some $z \in \I_*(W)$.
We must have $\{s,t\} \cap \DesR(y) = \varnothing$ by the definition of an involution word
and $m(s,t) <\infty$ since both $s$ and $t$ are right descents of $z$.
If $n \neq m(s,t;\Adstar_y)$ then 
our hypothesis gives us one pair of involution words of length $k+n$ for $z$
while the preceding argument gives another pair of involution words of length $k+m(s,t;\Adstar_y)$ for another element of $\I_*(W)$.
But each of the shorter words is a prefix of one of the longer words, so swapping these prefixes in the longer words
should result in two new elements in $\iR(z)$. This is impossible since these words would have adjacent repeated letters.
\end{proof}

The following is an analogue of the already mentioned fact
that
if $w \in W$ and $s,t \in \DesR(w)$, 
then $m(s,t)<\infty$ and $w$ has reduced words ending with both of the $m(s,t)$-letter sequences
$(\dots,t,s,t,s)$ and $(\dots,s,t,s,t)$ \cite[Lem. 1.2.1]{GP}.

\begin{proposition}\label{tprop2}
Suppose  $z \in \I_*(W)$, $s,t \in \DesR(z)$, and $n=m(s,t;\Adstar_z)$.
Then $n<\infty$ and there exists a unique $ y \in \I_*(W)$
such that the words 
\[
(r_1,r_2,\dots,r_k, \underbrace{\dots,t,s,t,s}_{n\text{ terms}}) \quand (r_1,r_2,\dots,r_k,\underbrace{\dots,s,t,s,t}_{n\text{ terms}})
\]
are both in $\iR(z)$ for some (equivalently, every) $(r_1,r_2,\dots,r_k) \in \iR(y)$.
\end{proposition}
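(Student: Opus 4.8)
The plan is to reduce everything to Proposition~\ref{tprop1} by a maximality argument. That $n<\infty$ is immediate: since $s,t\in\DesR(z)$ we have $m(s,t)<\infty$ (the standard Coxeter fact recalled just before the statement; see also \cite[Lem.~1.2.1]{GP}), and then the preceding Corollary, or formula \eqref{m-eq}, gives $n=m(s,t;\Adstar_z)\le m(s,t)<\infty$. For existence, I would choose an involution word $(r_1,\dots,r_k,c_1,\dots,c_m)\in\iR(z)$ whose terminal alternating-in-$\{s,t\}$ segment $(c_1,\dots,c_m)$ has length $m$ as large as possible over all involution words for $z$; since $s\in\DesR(z)$ forces $z$ to have an involution word ending in $s$, we have $m\ge 1$. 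Put $y:=1\,\underline r_1\,\underline r_2\cdots\underline r_k\in\I_*(W)$, so that $(r_1,\dots,r_k)\in\iR(y)$ and $z=y\,\underline c_1\cdots\underline c_m$. Because involution words are exactly the minimal-length sequences evaluating to a given element under the $\underline S$-action, appending a fixed suffix to an involution word of $y$ to obtain an involution word of $z$ depends only on $y$, not on the chosen word; this also disposes of the ``some $\Leftrightarrow$ every'' part of the statement.

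Next I would show $\{s,t\}\cap\DesR(y)=\varnothing$. If, say, $s\in\DesR(y)$, take an involution word of $y$ ending in $s$ and append $(c_1,\dots,c_m)$: the result is an involution word for $z$ ending in $\dots,s,c_1,\dots,c_m$, which is impossible, since if $c_1=s$ it has two equal adjacent entries (an involution word is a reduced word), while if $c_1=t$ its terminal alternating segment has length $\ge m+1$, contradicting maximality; the case $t\in\DesR(y)$ is symmetric. Hence $m':=m(s,t;\Adstar_y)$ is defined, finite, and by Proposition~\ref{tprop1} applied to $y$ both of the length-$(k+m')$ words obtained by appending the two alternating strings of length $m'$ to $(r_1,\dots,r_k)$ lie in $\iR(z'')$ for a single $z''\in\I_*(W)$ with $m(s,t;\Adstar_{z''})=m'$.

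It remains to prove $m=m'$ and $z''=z$; then $n=m(s,t;\Adstar_z)=m(s,t;\Adstar_{z''})=m'=m$ finishes existence. The bound $m\le m'$ follows by the argument in the final paragraph of the proof of Proposition~\ref{tprop1}: if $m\ge m'+1$ then $(r_1,\dots,r_k,c_1,\dots,c_{m'+1})$ is an involution word for some $z_2$, and since the two alternating strings of length $m'$ both evaluate to $z''$, replacing the initial length-$m'$ alternating block by the opposite orientation yields an involution word for $z_2$ whose last two letters coincide --- absurd. For $m\ge m'$, the key point is that the two orientations of a terminal alternating $\{s,t\}$-block of length strictly less than $m'$ cannot be interchanged: the half-braid relations of Examples~\ref{half-braid} and \ref{half-braid2} link them only at the maximal length $m'$, and no braid relation acts on an alternating run shorter than $m(s,t)$; hence if $m<m'$ every involution word of $z$ would end in $c_m$, contradicting $\{s,t\}\subseteq\DesR(z)$. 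Granting $m=m'$, one of the two length-$m'$ alternating strings equals $(c_1,\dots,c_m)$, so $(r_1,\dots,r_k,c_1,\dots,c_m)$ lies in both $\iR(z'')$ and $\iR(z)$, forcing $z''=z$. Uniqueness is then easy: if some $y_0$ has the stated property, applying to $z$ the reverse of either length-$n$ alternating word through the letterwise-involutive $\underline S$-action recovers $y_0$, and this reverse word depends only on $z$, $s$, $t$, and $n$. I expect the real obstacle to be the inequality $m\ge m'$ --- i.e.\ making precise that two right descents force the terminal alternating segment to be of full length $m'$ --- as a clean proof of this seems to require re-examining how Hansson and Hultman's relations restrict to the relevant rank-two and rank-three parabolic data (or, alternatively, constructing $y$ by the direct case analysis $y=z\Delta$, $\Delta^{*}z\Delta$, $\dots$ mirroring the proof of Proposition~\ref{tprop1}, which carries its own length and descent bookkeeping).
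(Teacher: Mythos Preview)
Your strategy matches the paper's: find $y\in\I_*(W)$ with $\{s,t\}\cap\DesR(y)=\varnothing$, invoke Proposition~\ref{tprop1} to get $m':=m(s,t;\Adstar_y)$, and show the alternating tail has length exactly $m'$. Your treatments of $n<\infty$, uniqueness, the ``some $\Leftrightarrow$ every'' equivalence, the vanishing of $\{s,t\}\cap\DesR(y)$, and the inequality $m\le m'$ are all correct and close to what the paper does.

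The gap you flag at $m\ge m'$ is real, but far easier to close than you suggest. Invoking Hansson--Hultman's relations is overkill, and your sketch is incomplete anyway: the half-braid relations involve $m(s,t;*)$, not $m(s,t;\Adstar_y)$, and you would still have to rule out the rank-three initial relations of types $^2\mathsf{A}_3$, $\mathsf{BC}_3$, $\mathsf{D}_4$, $\mathsf{H}_3$ changing the last letter. The direct argument is one line. If $m<m'$, then by Proposition~\ref{tprop1} the word $(r_1,\dots,r_k,c_1,\dots,c_{m'})$ lies in $\iR(z'')$, so its prefix of length $k+m+1$ is itself an involution word; hence $z\,\underline{c_{m+1}}$ has strictly greater involution-length than $z$, which forces $c_{m+1}\notin\DesR(z)$. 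Since $c_{m+1}\in\{s,t\}$, this contradicts the hypothesis. The paper's proof differs only cosmetically in constructing $y$: rather than maximizing over terminal alternating segments, it takes $y$ to be the first term in the sequence $z\underline s,\ z\underline s\,\underline t,\ z\underline s\,\underline t\,\underline s,\dots$ whose length is less than its successor, with $\alpha$ its position. This gives $\{s,t\}\cap\DesR(y)=\varnothing$ at once, and then both inequalities $\alpha\le m'$ and $\alpha\ge m'$ follow symmetrically from Proposition~\ref{tprop1} plus ``no adjacent repeats,'' exactly as in your $m\le m'$ step and the fix above.
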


\begin{proof}
Since $\{s,t\} \subset \DesR(w)$, we have $m(s,t)<\infty$.
The uniqueness of $y$ follows from the description of $\iR(z)$ in terms of the monoid action \eqref{uaction-eq},
so we just need to establish existence.
One can at least construct an element $y \in \I_*(W)$ with $\{s,t\}\cap \DesR(y)=\varnothing$ such that 
\be\label{qqqq} z = \underbrace{s^* \circ t^* \circ s^* \circ t^*  \circ \cdots}_{\alpha\text{ factors}} \circ y \circ \underbrace{\cdots \circ t \circ s \circ t \circ s}_{\alpha\text{ factors}}\ee
where $\alpha \geq 0$ is minimal: in terms of the action \eqref{uaction-eq}, $y$ is the first element in the sequence 
$z \underline s$,
$z \underline s \hs \underline t$,
$z \underline s \hs \underline t \hs \underline s$, \dots
whose length is less than the element which follows.
%
Appending the $\alpha$-letter sequence $(\dots,t,s,t,s)$ to any word in $ \iR(y)$ gives an element of $\iR(z)$,
and
by Proposition~\ref{tprop1} appending either of the $m(s,t;\Adstar_y)$-letter sequences $(\dots,t,s,t,s)$ or $(\dots,s,t,s,t)$ to any word in $ \iR(y)$
gives two involution words for some element of $\I_*(W)$.

In view of these properties, we cannot have $\alpha <m(s,t;\Adstar_y)$ as $s$ and $t$ are both in $\DesR(z)$,
and 
we cannot have $m(s,t;\Adstar_y) < \alpha$ as this would let us construct an involution word for $z$ with equal adjacent letters.
Thus $\alpha=m(s,t;\Adstar_y)$.
Proposition~\ref{tprop1} implies that $m(s,t;\Adstar_y)=m(s,t;\Adstar_z)=n$ so the result follows.
\end{proof}

These results extend \cite[Lem. 3.6]{HanssonHultman},
which asserts that if $s$ and $t$ are distinct right descents of $z \in \I_*(W)$
then
$\iR(z)$ contains two words of the form \eqref{rwords-eq} with $n=m(s,t)$
if and only if 
$m(s,t;\Adstar_z) = m(s,t) $.
Proposition~\ref{tprop1} implies the ``only if'' part of this claim 
while Proposition~\ref{tprop2} implies the ``if'' direction.

  \section{Relations for primed words}\label{rel-sect1}
  
An index $i$ is a \defn{commutation} 
in an involution word $(s_1,s_2,\dots,s_n)$ 
if  $s_i^* y = ys_i$ for $y:=  s_{i-1}^* \circ \cdots \circ s_2^* \circ s_1^* \circ s_1 \circ s_2 \circ \cdots \circ s_{i-1}$.
A \defn{primed involution word} for $z \in \I_*(W)$ is a sequence formed from an involution word for $z$ by adding primes to some set of letters indexed by commutations. The elements of such a sequence belong to $S \sqcup S'$ where $S' := \{ s' : s \in S\}$ is a duplicate set of formal symbols.
We write $\iR^+(z)$ for the set of all primed involution words for $z$. Figure~\ref{primed_invol4321-fig} shows an example of this set.

\begin{figure}[h]
\begin{center}
\input{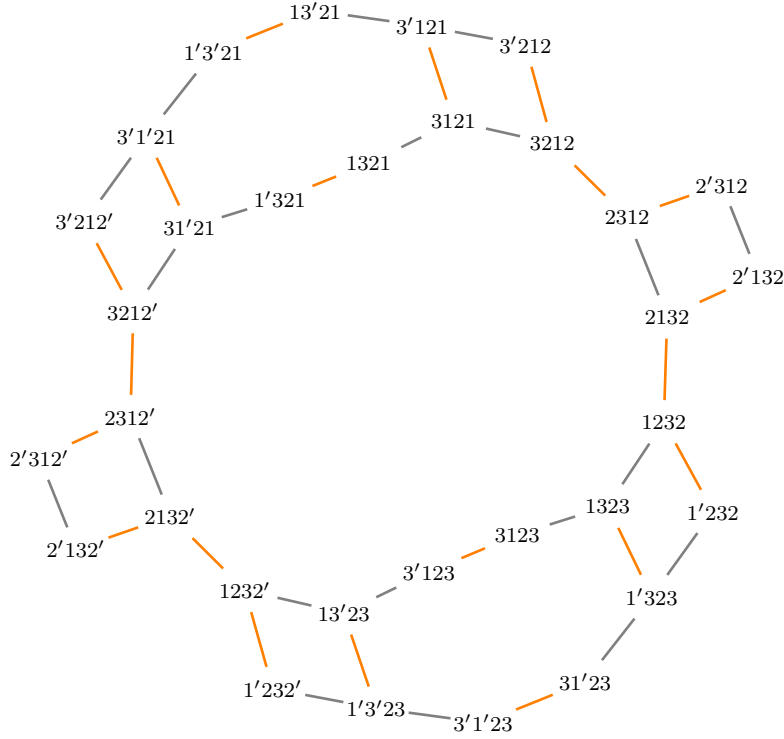}
\end{center}
\caption{Primed involution words for $z=(1,4)(2,3)\in S_4$ with $*=\id$. 
Each expression like $13'21$ is an abbreviation for a sequence like $(s_1,s_3',s_2,s_1)$ where $s_i := (i,i+1) \in S_4$.
Grey edges are \defn{primed braid relations} while colored edges are
\defn{primed half-braid relations} in the sense of Definitions~\ref{pbr-def} and \ref{pbr-half-def}.
}\label{primed_invol4321-fig}
\end{figure}

The number of commutations is the same in every involution word
for a fixed $z \in \I_*(W)$ \cite[Prop. 2.5]{Hultman3}: in view of \eqref{oo-eq}
this number must be $2 \rho_*(z) - \ell(z)$
where $\rho_*(z)$ denotes the common length of every word in $\iR(z)$.
The cardinality of $\iR^+(z)$ is therefore $2^{2 \rho_*(z) - \ell(z)}|\iR(z)|$.
Our main result in this section is a version of Theorem~\ref{hh-thm} for these sets.

\begin{lemma}\label{primed-lem}
Let $z \in \I_*(W)$ and 
$a =(a_1,a_2,\dots,a_l)\in \iR(z)$.
Choose distinct simple generators $s,t \in S$ with $n := m(s,t)<\infty$
and suppose  that
\[(a_{i+1},a_{i+2},\dots,a_{i+n})=(s,t,s,t,\dots)\]
for some $0\leq i \leq l-n$. Then the following properties hold:
\ben
\item[(a)] No index $j$ with $i+1<j<i+n$ is a commutation in $a$.
\item[(b)] If the indices ${i+1}$ and ${i+n}$ are both commutations in $a$ then $n=2$.
\item[(c)] Form $b  \in \iR(z)$
from $a$ by replacing the subword
$(a_{i+1},a_{i+2},\dots,a_{i+n})$  by
$(t,s,t,s,\dots)$. Then the set of commutations for $b$ is
the image of the set of commutations for $a$
under the transposition $(i+1 \leftrightarrow i+n)$.
\een
\end{lemma}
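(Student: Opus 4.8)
The plan is to work with $y := 1\underline{a_1}\cdots\underline{a_i}\in\I_*(W)$ (so $(a_1,\dots,a_i)\in\iR(y)$), the partial products $y_j := 1\underline{a_1}\cdots\underline{a_{j-1}}$, and the automorphisms $\theta_j := \Adstar_{y_j}$, writing $\theta := \theta_{i+1} = \Adstar_y$. The key reformulation, read off from the definitions, is that position $j$ is a commutation in $a$ exactly when $\theta_j(a_j) = a_j$, equivalently when the action step is $y_j\underline{a_j} = y_j a_j$ rather than $a_j^* y_j a_j$, equivalently when $\ell(y_{j+1}) = \ell(y_j)+1$ rather than $\ell(y_j)+2$. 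The ordinary braid relation carries $a$ to $b$ and carries the length-$(i+n)$ prefix of $a$, which lies in $\iR(y')$ for $y' := 1\underline{a_1}\cdots\underline{a_{i+n}}$, to the corresponding prefix of $b$; so both alternating words $(s,t,s,\dots)$ and $(t,s,t,\dots)$ of length $n = m(s,t)$, appended to a word of $\iR(y)$, yield words of $\iR(y')$. By Proposition~\ref{tprop1} this forces $\{s,t\}\cap\DesR(y) = \varnothing$ and $m(s,t;\theta) = m(s,t)$, and then \eqref{m-eq}, using that $\Adstar_y$ is an involutive diagram automorphism (so $\theta(s) = t \iff \theta(t) = s$), leaves exactly three cases: (A) $n = 2$ with $\theta(s) = s$ and $\theta(t) = t$; (B1) $\theta(s),\theta(t)\notin\{s,t\}$; (B2) exactly one of $s,t$ is $\theta$-fixed and the other is sent outside $\{s,t\}$.

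Next I would count the block commutations by lengths: for any $(c_1,\dots,c_p)\in\iR(v)$ the number of commutations is $2\rho_*(v)-\ell(v) = 2p-\ell(v)$, so among positions $i+1,\dots,i+n$ of the prefix $(a_1,\dots,a_{i+n})$ there are exactly $2n-(\ell(y')-\ell(y))$ commutations. Reading $y'$ off from the proof of Proposition~\ref{tprop1} — $y' = y\Delta$ in case (A), $y' = \Delta^* y\Delta$ in case (B1), $y' = \Delta^* y r\Delta$ with $r\in\{s,t\}$ the $\theta$-fixed generator in case (B2), where $\Delta := w_0^{\{s,t\}}$ has $\ell(\Delta) = n$ — and applying the parabolic length formula $\ell(uw_0^J) = \ell(u)+\ell(w_0^J)-2\ell(u_J)$, one finds $\ell(y')-\ell(y)$ equals $2$, $2n$, and $2n-1$ respectively, so the block contains $2$, $0$, and $1$ commutations. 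Part~(b) follows at once: commutations at both $i+1$ and $i+n$ can occur only in case (A), where $n = 2$.

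To locate the block commutation I would track $\theta_j$ along the block. As long as positions $i+1,\dots,j-1$ are non-commutations, every step is a conjugation and $\theta_j = c_{w_j}^{-1}\circ\theta\circ c_{w_j}$, where $w_j := a_{i+1}\cdots a_{j-1}\in\langle s,t\rangle$ and $c_u\colon x\mapsto uxu^{-1}$; hence position $j$ is a commutation iff the reflection $r_j := w_j a_j w_j^{-1}\in\langle s,t\rangle$ is fixed by $\theta$. Comparing alternating words of length $\le m(s,t)$ shows $r_j = s$ iff $j = i+1$, $r_j = t$ iff $j = i+n$, and $\supp(r_j) = \{s,t\}$ otherwise; and since $\theta$ is a diagram automorphism, $\theta(r_j) = r_j$ forces $\theta(\supp(r_j)) = \supp(r_j)$. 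In case (B1), no support of a reflection of $\langle s,t\rangle$ is $\theta$-stable, so the first block commutation cannot exist; in case (B2), the first block commutation $j_0$ has $r_{j_0}$ $\theta$-fixed, which for an interior $j_0$ would need $\theta(\{s,t\}) = \{s,t\}$, impossible — so $j_0\in\{i+1,i+n\}$, and by the count it is the only one, proving part~(a); moreover $j_0 = i+1$ iff $r_{i+1} = s$ is $\theta$-fixed, i.e. iff $\theta(s) = s$. The same statement applied to $b$ gives: the unique block commutation of $b$ lies at $i+1$ iff $\theta(t) = t$. In case (A) I would instead check directly, using $sts = t$, that $i+1$ and $i+2$ are both commutations for $a$ and for $b$.

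For part~(c), outside $\{i+1,\dots,i+n\}$ the partial products $y_j$ coincide for $a$ and $b$ — they agree before position $i+1$, at position $i+n+1$ both equal $y'$ since the two length-$(i+n)$ prefixes are involution words for $y'$, and agree thereafter — so the commutations there are identical and fixed by the transposition $(i+1\leftrightarrow i+n)$. Inside the block the counts agree, and: in case (A) both positions are commutations for both words while the transposition fixes $\{i+1,i+2\}$; in case (B1) neither position is a commutation for either word; and in case (B2), since exactly one of $\theta(s) = s$, $\theta(t) = t$ holds, the preceding paragraph puts the unique block commutation of $a$ at one of $i+1,i+n$ and that of $b$ at the other, which is precisely its image under $(i+1\leftrightarrow i+n)$. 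The main obstacle is this third paragraph — tracking $\theta_j$ and locating the single commutation in case (B2) via the reflection-support argument, then matching it between $a$ and $b$; the other steps reduce to the length bookkeeping above and direct appeals to Proposition~\ref{tprop1}, \eqref{m-eq}, and \eqref{uaction-eq}.
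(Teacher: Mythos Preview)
Your approach via Proposition~\ref{tprop1} and the case split (A)/(B1)/(B2) is genuinely different from the paper's and is attractive: the length-counting gives the number of block commutations directly, from which (b) is immediate, and your part~(c) reduces cleanly to locating the single block commutation in case~(B2). The paper instead proves (a) first by a doubled-word argument (if an interior index $j$ were a commutation, one of the $2^q$ reduced words for $z$ obtained by deleting one of $a_j^*, a_j$ from the doubled word would have two equal adjacent letters), proves (b) by a direct algebraic computation, and then deduces (c) from (a), (b), and a short appeal to Proposition~\ref{tprop1}.

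There is, however, a real gap in your part~(a). You assert that $\theta = \Adstar_y$ is a ``diagram automorphism'' and hence that $\theta(r_j) = r_j$ forces $\theta(\supp(r_j)) = \supp(r_j)$. But $\Adstar_y$ is only an involutive \emph{group} automorphism; it does not send $S$ to $S$ (e.g.\ in $S_4$ with $* = \id$ and $y = s_3$ one has $\Adstar_y(s_2) = (2,4) \notin S$), so there is no reason it should preserve supports. Concretely, in the sub-case of~(B2) where $\theta$ fixes $t$ but not $s$ and the block of $a$ begins with $s$, you have not excluded the possibility that the unique block commutation sits at an interior position $i+k$ with $1 < k < n$: this would require $\theta(r_{i+k}) = r_{i+k}$ for some interior reflection $r_{i+k}\in\langle s,t\rangle$, and nothing you have said rules that out. (Your argument \emph{does} correctly handle the sub-case $\theta(s) = s$, since then position $i+1$ is visibly a commutation and the count of~$1$ finishes it.) The paper's doubled-word argument for~(a) is an easy patch; once (a) is in hand, your location claim ``the block commutation is at $i+1$ iff the first block letter is $\theta$-fixed,'' together with the fact that exactly one of $s,t$ is $\theta$-fixed in~(B2), completes~(c) exactly as you intend.
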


In particular,  part (c) means that ${i+1}$ (respectively, ${i+n}$) is a commutation in $a$
if and only if ${i+n}$ (respectively, ${i+1}$) is a commutation in $b$.

\begin{proof}
It follows from \eqref{oo-eq} that if there are $q$ commutations in $a$,
then we can form $2^q$ reduced words for $z$
by removing exactly one of $a_j^*$ or $a_j$ 
from
the doubled word  $(a_l^*,\dots, a_2^*,a_1^*,a_1,a_2,\dots,a_l)$ 
for each commutation $j$ in $a$.
Part (a) follows since if any of the indices
 ${i+2},{i+3},\dots,{i+n-1}$
were commutations, then one of these words would contain 
two adjacent letters equal to $s$ or $t$, which is impossible since each is reduced.
This observation is also noted within the proof of \cite[Lem. 3.3]{HanssonHultman}.

To prove part (b), suppose that both ${i+1}$ and ${i+n}$ are commutations in $a$.
Let $y = a_i^*\circ\cdots \circ a_1^* \circ a_1 \circ \cdots \circ a_i$
so that we have 
$ a_{i+1}^* \circ y \circ a_{i+1} = ys = s^*y $.
Also define 
$w: =a_{i+2}a_{i+3}\cdots a_{i+n-1}= tstst\cdots$ ($n-2$ factors).
If $n$ is odd then $w=w^{-1}$ so
part (a) implies that
\[
\ba
a_{i+n}^*\circ \cdots \circ a_{1}^*   \circ a_{1} \circ \cdots \circ a_{i+n} &
= s^* \circ w^* \circ s^*\circ y \circ s \circ w \circ s 
\\&= w^*ys ws \\&= s^*w^* ysw \\&= s^*w^*s^* yw
\ea\]
and therefore $ y(swsw) = (wsws)^* y$.
In this case $swsw = (st)^{n-1} = ts$ and $wsws= st$ so
we have $s^*t^* y = yts$.
Likewise if $n$ is even then
\[
\ba
a_{i+n}^*\circ \cdots \circ a_{1}^* \circ  a_{1} \circ \cdots \circ a_{i+n} &
= t^* \circ (w^{-1})^* \circ s^*\circ y \circ s \circ w \circ t
\\&=  (w^{-1})^*ys wt \\&= t^* (w^{-1})^* ysw \\&= t^* (w^{-1})^*s^* yw
\ea\]
so $y (swtw^{-1}) = (wtw^{-1}s)^* y$,
and as 
 $swtw^{-1} = (st)^{n-1}=ts$ and $wtw^{-1}s=st$
 it follows again that $s^*t^* y = yts$.
Thus for either parity of $n$ we have 
\[ t^* ys = s^*(s^*t^*y)s = s^*(yts)s = s^* y t = yst.\]
This implies that ${i+2}$ is a commutation in $a$, 
which by part (a) can only occur if $n=2$.
This proves part (b).

For part (c) 
we may assume that $i+n =l$.
The desired property is clear if $n=2$, so suppose $n\geq 3$.
Note that $b \in \iR(z)$ since involution words are closed under the usual braid relations.
The number of commutations is the same in all involution words for
$z$, so it follows from parts (a) and (b) 
that if ${i+1}$ is a commutation in $a$
then exactly one of ${i+1}$ or ${i+n}$ is a commutation in $b$,
and that if ${i+n}$ is a commutation in $b$ then exactly one of 
${i+1}$ or ${i+n}$ is a commutation in $a$.
Therefore it suffices to show that $i+1$ (respectively $i+n$) is not a commutation in both $a$
and $b$.

Again let $y= a_i^*\circ\cdots \circ a_1^* \circ a_1 \circ \cdots \circ a_i \in \I_*(W)$.
If $i+1$ were a commutation in both $a$ and $b$ then we would have $\{ y s,yt\} = \{s^*y,t^*y\}$
or equivalently $\Adstar_y(\{s,t\}) = \{s,t\}$,
and if $i+n$ were a commutation in both $a$ and $b$ then we would have $\{ z s,zt\} = \{s^*z,t^*z\}$
or equivalently $\Adstar_z(\{s,t\}) = \{s,t\}$.
Neither condition can occur since $m(s,t)$ is odd and
Proposition~\ref{tprop1} implies that $m(s,t;\Adstar_y) = m(s,t;\Adstar_z) = m(s,t)$.
\end{proof}

We may now describe versions of braid relations for words in $S\sqcup S'$.

\begin{definition}\label{pbr-def}
The \defn{primed braid relations} for the twisted Coxeter system $(W,S,*)$ are the  relations on words with all letters in $S \sqcup S'$ 
that have
\be
(\dash,s',t',\dash) \sim(\dash,t',s',\dash)
\ee
for any $s,t \in S$ such that $m(s,t)=2$,  as well as
\be\label{rel2}
(\dash,\underbrace{s,t,s,t,\dots}_{n\text{ factors}},\dash) \sim(\dash,\underbrace{t,s,t,s,\dots}_{n\text{ factors}},\dash)
\ee
for any $s,t \in S$ such that $n=m(s,t)\in \{2,3,4,\dots\}$, as well as
\be
(\dash,\underbrace{s',t,s,t,\dots,s}_{n\text{ factors}},\dash) \sim(\dash,\underbrace{t,s,t,s,\dots,t'}_{n\text{ factors}},\dash)
\ee
for any $s,t \in S$ such that $n=m(s,t) \in \{3,5,7,\dots\} $, and finally
\be\label{rel4}
(\dash,\underbrace{s',t,s,t,\dots,t}_{n\text{ factors}},\dash) \sim(\dash,\underbrace{t,s,t,s,\dots,s'}_{n\text{ factors}},\dash)
\ee
for any $s,t \in S$ such that $n=m(s,t)\in \{2,4,6,\dots\}$.
\end{definition}

Note that if $s,t\in S$ have $m(s,t) = 2$ then 
\[
(\dash,s,t,\dash) \sim(\dash,t,s,\dash)\quand (\dash,s',t,\dash) \sim(\dash,t,s',\dash)
\]
are  also primed braid relations, as these are special cases of \eqref{rel2} and \eqref{rel4}.

It follows from Lemma~\ref{primed-lem} that the primed braid relations preserve each set
$\iR^+(z)$. 
These relations cannot always span $\iR^+(z)$,  since none of them changes the number of primed letters in a word.
To get a spanning relation, we must add the following
  analogues of initial relations.

\begin{definition}
Choose  $J \subset S$ with $J=J^*$ such that $W_J = \langle J \rangle$ is finite.
If  $(s_1,s_2,\dots,s_n)$ and $(t_1,t_2,\dots,t_n)$ both belong to $\iR^+(w_0^J)$, then we refer to 
\[ (s_1,s_2,\dots,s_n,\dash) \sim  (t_1,t_2,\dots,t_n,\dash)\]
as a \defn{primed initial relation}, whose \defn{type} is the isomorphism class of $(W_J,J,*)$.
\end{definition}

\begin{theorem}\label{hh-thm2}
Let $z \in \I_*(W)$. Then $\iR^+(z)$ is an equivalence class under
the transitive closure of the primed braid relations for $(W,S)$ plus all primed initial relations
of type $\mathsf{A}_1$, $^2\mathsf{A}_3$, $\mathsf{BC}_3$, $\mathsf{D}_4$, $\mathsf{H}_3$, $\mathsf{I}_2(n)$, or $^2\mathsf{I}_2(n)$ for $2\leq n < \infty$.
\end{theorem}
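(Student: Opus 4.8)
The plan is to reduce the statement to Hansson and Hultman's Theorem~\ref{hh-thm} by separating the ``primed'' information from the underlying involution word. Given $z \in \I_*(W)$ and a primed involution word $\mathbf{a} \in \iR^+(z)$, write $\mathbf{a} = (a, C)$ where $a \in \iR(z)$ is the underlying (unprimed) involution word obtained by erasing all primes, and $C$ is the subset of commutations in $a$ that carry a prime. First I would check that the primed braid relations and the primed initial relations of type $\mathsf{X}_n$ with $\mathsf X \neq \mathsf A_1$ ``cover'' the unprimed relations of Theorem~\ref{hh-thm}: whenever $a \sim a'$ is one of the braid or initial relations there, and $\mathbf a = (a,C) \in \iR^+(z)$, I want to produce a sequence of primed relations from $\mathbf a$ to some $(a', C')$. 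For braid relations this is exactly Lemma~\ref{primed-lem}(c): the primed braid relations \eqref{rel2}, together with the primed variants that move a prime from one end of the braid to the other, realise the transposition $(i+1 \leftrightarrow i+n)$ on the commutation set, which by Lemma~\ref{primed-lem}(a),(b) is precisely how $C$ transforms. For the initial relations of type $^2\mathsf A_3$, $\mathsf{BC}_3$, $\mathsf D_4$, $\mathsf H_3$, $\mathsf I_2(n)$, $^2\mathsf I_2(n)$, the point is that the corresponding primed initial relations have, by definition, both sides ranging over \emph{all} of $\iR^+(w_0^J)$, so once I know $\iR^+(w_0^J)$ is a single equivalence class under the primed braid relations plus the $\mathsf A_1$ primed initial relations (a finite check, or an induction on $|J|$), any unprimed initial relation lifts.

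Second, I would handle the ``number of primes'' direction: the primed braid relations preserve the number of primed letters, so something must change it, and that role is played precisely by the primed initial relation of type $\mathsf A_1$. Concretely, for $J = \{s\}$ with $s = s^*$, the longest element is $w_0^J = s$ (if $s \neq s^*$ there is no such $J$ of rank $1$ fixed by $*$ giving a commutation, so the relevant case is $s^* = s$), and when $s^* = s$ the single index is a commutation, so $\iR^+(w_0^J) = \{(s),(s')\}$ and the $\mathsf A_1$ primed initial relation is exactly $(s,\dash)\sim(s',\dash)$. The key local claim is: if $i$ is a commutation in an involution word $a \in \iR(z)$, then using the primed braid relations one can ``transport'' the prime status of position $i$ to the front of the word — i.e. $(a, C) $ is equivalent to a word whose first letter is the (possibly primed) generator $a_{\sigma(1)}$ carrying the prime iff $i \in C$ — after which the $\mathsf A_1$ relation toggles it. This transport step is where I expect Lemma~\ref{primed-lem}(c) and the primed versions of \eqref{rel2}--\eqref{rel4} get used repeatedly, sliding the prime leftward through commuting pairs and through odd/even braid moves; the subtlety is that a prime attached to a commutation index stays attached to a commutation index under these moves, which is exactly the content of part (c) of the lemma together with Proposition~\ref{tprop1}.

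Putting these together: given $\mathbf a = (a,C)$ and $\mathbf b = (b, D)$ in $\iR^+(z)$, first use the $\mathsf A_1$ relations (via the transport step) to remove all primes, obtaining $a \in \iR(z)$ and $b \in \iR(z)$ with no primes; then apply Theorem~\ref{hh-thm} to connect $a$ to $b$ through braid and unprimed initial relations, each step lifted as above to the unprimed primed words; finally re-introduce the primes of $\mathbf b$ one at a time using the $\mathsf A_1$ relations and the transport step in reverse. Conversely, every primed braid and primed initial relation visibly preserves each $\iR^+(z)$: for the braid relations this is Lemma~\ref{primed-lem}, and for the initial relations it is immediate from the definition. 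I expect the main obstacle to be the transport step — proving cleanly that prime-carrying commutations can be slid to any desired position using only the listed primed braid relations, without needing extra exceptional relations beyond those already in Theorem~\ref{hh-thm}. This likely requires an induction on $\rho_*(z)$ paralleling the structure of Hansson and Hultman's argument in \cite[\S3]{HanssonHultman}, invoking Propositions~\ref{tprop1} and \ref{tprop2} to control how commutations behave when a rightmost descent is removed, and a careful case analysis of the dihedral situations (odd $m(s,t)$, even $m(s,t)$, and the $^2\mathsf I_2(n)$ twisted case) to see that the primed braid relations \eqref{rel2}--\eqref{rel4} suffice in each.
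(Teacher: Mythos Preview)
Your overall strategy---reduce to showing every primed involution word is equivalent to its unprimed form, then invoke Theorem~\ref{hh-thm}---is correct and is exactly what the paper does. The difference lies in the step you call ``transport.'' You propose sliding each prime leftward to position~$1$ via a direct dihedral case analysis, and you rightly flag this as the main obstacle. The paper sidesteps it with a short indirect argument. First, by induction on the length~$n$ (applying the inductive hypothesis to the length-$(n{-}1)$ prefix), reduce to a single prime sitting at the last position~$n$. The commutation condition there gives $z = ys = s^*y$, from which one checks that the penultimate letter~$s_{n-1}$ is also a right descent of~$z$. Theorem~\ref{hh-thm} then supplies a sequence of ordinary braid and initial relations taking the unprimed word to one ending in~$s_{n-1}$. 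Now simply replay that sequence on the primed word, substituting primed braid relations where needed: any step not touching position~$n$ leaves the prime in place, while the first step that does touch it is either a braid relation (Lemma~\ref{primed-lem}(c) then moves the prime strictly earlier---induct) or an initial relation of full length~$n$ (replace it by the primed initial relation mapping the current word to an unprimed one). No case analysis is needed. A minor point: your remark that lifting an unprimed initial relation requires first showing $\iR^+(w_0^J)$ is connected under primed braid plus~$\mathsf A_1$ relations is unnecessary---the primed initial relation of the relevant type already relates any two elements of~$\iR^+(w_0^J)$ directly by definition.
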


This is the simplest possible extension of Theorem~\ref{hh-thm}.
The only type not listed earlier is $\mathsf{A}_1$, which contributes  $(s,\dash) \sim (s',\dash)$ for $s=s^* \in S$.

\begin{proof}
Fix a primed involution word $a = (s_1,s_2,\dots ,s_n ) \in \iR^+(z)$.
It suffices by Theorem~\ref{hh-thm} to show that this word is equivalent under 
the given relation to a word in $\iR(z) \subset \iR^+(z)$.
It is enough to check this when $s_1,s_2,\dots,s_{n-1} \in S$ and $s_n=s' \in S'$ for some $s\in S$.
In this case $zs=s^*z$ and $s \in \DesR(z)$.

If $n=0$ then  $z=s=s^*$ and $a = (s') \sim (s)$ using the primed initial relation of type $\mathsf{A}_1$.
If $n>0$ then $ s_{n-1}\in \DesR(z)$,
so $z$ has at least one other involution word ending in $s_{n-1}$.
Theorem~\ref{hh-thm}
implies that we may apply a sequence of braid relations and initial relations
to transform $(s_1,s_2,\dots,s_{n-1},s)$
to this word.

Consider what happens if we try to apply this sequence to the word $a$,
with primed braid relations in place of ordinary braid relations
but still using unprimed initial relations.
One of two cases must occur. Either some primed braid relation in this sequence 
moves the single prime from $s_n=s'$ to an earlier letter, 
or we reach a point where we wish to  apply an initial relation of length $n$.
(Otherwise, the relations would not change the last letter of 
$a$.)

In the first case, we may assume by induction (on the position of the primed letter)
that some sequence of primed braid relations and primed initial relations turns 
$a$ into an element of $\iR(z)$ as needed.
In the second case, we just substitute the initial relation we want to apply with a
primed initial relation that removes all primes from our word,
and we again get an element of $\iR(z)$.
\end{proof}


We do not need to include all primed initial relations of the types indicated 
to span $\iR^+(z)$. 
Our next theorem describes one possible choice for a minimal set of sufficient relations.
Recall the formula for $m(s,t;\theta)$ from \eqref{m-eq}.

\begin{definition}\label{pbr-half-def}
The \defn{primed half-braid relations} for the twisted Coxeter system $(W,S,*)$ are 
  relations on words with all letters in $S \sqcup S'$ that have
\be\label{phb1}
(\underbrace{ \dots,t,s,t,s,t,s}_{m(s,t;*)\text{ letters}},\dash) \sim
(\underbrace{ \dots,s,t, s, t, s,t}_{m(s,t;*)\text{ letters}},\dash)
\ee
for any $s,t \in S$ such that $\{s^*,t^*\} = \{s,t\}$ and $m(s,t)<\infty$, as well as
\be\label{phb2}
(\underbrace{ \dots,t,s,t, s, t, s}_{m(s,t;*)\text{ letters}},\dash) \sim
(\underbrace{ \dots,t,s,t, s, t, s'}_{m(s,t;*)\text{ letters}},\dash)
\ee
for any $s,t \in S$ such that either $s^*=s$ and $t^*=t$ and $m(s,t) \in \{4,6,8,\dots\}$, 
or $s^*=t$ and $t^*=s$ and $m(s,t) \in \{1,3,5,\dots\}$.
\end{definition}


If  $s=s^*$ then the  type $\mathsf{A}_1$  primed initial relation
$ (s,\dash) \sim (s',\dash)$ is the instance of  \eqref{phb2} with $s=t$.
If $s\neq t$ then \eqref{phb1} and \eqref{phb2} are primed initial relations of type $\mathsf{I}_2(n)$ or $^2\mathsf{I}_2(n)$ for $n:=m(s,t)<\infty$,
and all primed initial relations for the finite dihedral types arise in this way.
  
If 
$s^*=s$, $t^*=t$, and $m(s,t)=2$, then \eqref{phb2} is not considered to be a primed half-braid relation.
We exclude this case because then $m(s,t;*) = 2$
so
 \eqref{phb2} is the relation
$ (t,s,\dash) \sim (t,s',\dash)$,
 which one can alternatively get using
 the primed braid relations as
$(t,s,\dash) \sim (s,t,\dash)\sim (s',t,\dash)\sim (t,s',\dash)$.

\begin{figure}[h]
\[\barr{cccc}
\barr{c} \athreediagram \\ {^2\mathsf{A}_3}\\ {(\text{with }a^* = c)} \earr
&
\barr{c} \bthreediagram \\ \mathsf{BC}_3 \earr
&
\barr{c} \dfourdiagram \\ \mathsf{D}_4 \earr
&
\barr{c} \hthreediagram \\ \mathsf{H}_3 \earr
\earr
\]
\caption{Coxeter graphs for types 
 $^2\mathsf{A}_3$, $\mathsf{BC}_3$, $\mathsf{D}_4$, and $\mathsf{H}_3$}
 \label{fig1}
\end{figure}
 
\def\underlinea{A}
\def\underlineb{B}
\def\underlinec{C}
\def\underlined{D}
\def\underlinex{X}

In the following theorem, we assume that the 
  Coxeter graph for any twisted subsystem $(W_J,J,*)$ of type
 $^2\mathsf{A}_3$, $\mathsf{BC}_3$, $\mathsf{D}_4$, or $\mathsf{H}_3$
is labeled as in Figure~\ref{fig1}.

\begin{theorem}\label{pr-rel-thm}
Suppose $z \in \I_*(W)$. Then $\iR^+(z)$ is an equivalence class under 
the transitive closure of the primed braid relations, the primed half-braid relations,
and the symmetric relation with the following properties for each 
subset  $J=J^*\subseteq S$
labeled as in Figure~\ref{fig1}:
\begin{itemize}

\item If $(W_J,J,*)$ is of type $^2\mathsf{A}_3$ then
\[\ba  (b,c,a,b ,\dash)  &\sim ( b,c,b,a,\dash), \\
(b,c,a,b ,\dash)  &\sim ( b,c,a,b',\dash), \text{ and }\\
( b,c,b,a ,\dash)  &\sim ( b,c,b,a',\dash). \ea
\]

\item If $(W_J,J,*)$ is of type $\mathsf{BC}_3$ then 
\[\ba
( a,b,c,a,b,a,\dash) &\sim
 ( a,b,c,b,a,b,\dash),
 \\
 ( a,b,c,a,b,a,\dash) &\sim
 ( a,b,c,a,b,a',\dash), \text{ and }\\
  ( a,b,c,b,a,b,\dash) &\sim
 (a,b,c,b,a,b',\dash).
\ea
\]

\item  If $(W_J,J,*)$ is of type $\mathsf{D}_4$ then 
\[\ba
( d,b,a,c,b,a, c,d,\dash) &\sim
( d,b,a,c,b,a,d,c,\dash),
\\
( d,b,a,c,b,a, c,d,\dash) &\sim
( d,b,a,c,b,a, c,d',\dash), \text{ and }
\\
( d,b,a,c,b,a,d,c,\dash) &\sim
( d,b,a,c,b,a,d,c',\dash).
\ea
\]

\item  If $(W_J,J,*)$ is of type $\mathsf{H}_3$ then 
\[\ba
( a,c,b,a,c,b,a,b, c,\dash) &\sim
(  a,c,b,a,c,b,a,c,b,\dash), 
\\
( a,c,b,a,c,b,a,b, c,\dash) &\sim
(   a,c,b,a,c,b,a,b, c',\dash), \text{ and }
\\
( a,c,b,a,c,b,a,c,b,\dash) &\sim
(  a,c,b,a,c,b,a,c,b',\dash).
\ea
\]

\end{itemize}
\end{theorem}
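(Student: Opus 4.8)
The plan is to derive Theorem~\ref{pr-rel-thm} from Theorem~\ref{hh-thm2}. Write $R$ for the set of relations listed in Theorem~\ref{pr-rel-thm} --- the primed braid relations, the primed half-braid relations, and the three exceptional relations for each of $^2\mathsf A_3$, $\mathsf{BC}_3$, $\mathsf D_4$, $\mathsf H_3$ --- and $R'$ for the set appearing in Theorem~\ref{hh-thm2}, namely the primed braid relations together with \emph{all} primed initial relations of types $\mathsf A_1$, $^2\mathsf A_3$, $\mathsf{BC}_3$, $\mathsf D_4$, $\mathsf H_3$, $\mathsf I_2(n)$, $^2\mathsf I_2(n)$. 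Each relation of $R$ already belongs to $R'$: the relations \eqref{phb1} and the $s\neq t$ instances of \eqref{phb2} are primed initial relations of dihedral type, the $s=t$ instance of \eqref{phb2} is the $\mathsf A_1$ relation, and the exceptional relations are primed initial relations of the indicated type. So the equivalence generated by $R$ is contained in that generated by $R'$, which by Theorem~\ref{hh-thm2} is the assertion that $\iR^+(z)$ is a single class; it remains to prove the reverse, i.e.\ that every relation of $R'$ is generated by $R$. This is clear for primed braid relations, and for a primed initial relation $(u,\dash)\sim(v,\dash)$ attached to a finite parabolic $W_J$ with $J=J^*$ it suffices to connect $u$ to $v$ inside $\iR^+(w_0^J)$ using relations of $R$, since appending any common admissible suffix to the members of such a chain again yields members of a common set $\iR^+(z)$ (as the commutations in the first $\rho_*(w_0^J)$ positions are unaffected), and each relation of $R$ may be performed inside such a prefix. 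Thus the theorem reduces to: \emph{$\iR^+(w_0^J)$ is a single equivalence class under $R$ (restricted to words with letters in $J$) for every finite $J=J^*$ of type $\mathsf A_1$, $^2\mathsf A_3$, $\mathsf{BC}_3$, $\mathsf D_4$, $\mathsf H_3$, $\mathsf I_2(n)$, or $^2\mathsf I_2(n)$.}

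For type $\mathsf A_1$ the claim is the single relation $(s,\dash)\sim(s',\dash)$, which is the $s=t$ instance of \eqref{phb2}. For the dihedral types, $\iR(w_0^{\{s,t\}})$ consists of the two alternating words of length $m(s,t;*)$, joined by \eqref{phb1} (or, when $m(s,t)=2$, by an ordinary braid relation); running through \eqref{oo-eq} as in the proof of Lemma~\ref{primed-lem}(a) shows that the only positions of these two words that can be commutations are the first and the last. A prime on the leading letter --- which is a commutation exactly when that letter is $*$-fixed --- is removed by the $\mathsf A_1$ form of \eqref{phb2}, and a prime on the trailing letter is removed by the general form of \eqref{phb2}, whose hypotheses were tailored to cover precisely the configurations in which a trailing letter is a commutation; the one borderline case $m(s,t)=2$ with $s^*=s$, $t^*=t$ is handled instead by the chain of primed braid relations recorded just after Definition~\ref{pbr-half-def}. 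Chaining these facts gives the claim for dihedral $J$.

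For the four exceptional types I would treat the unprimed and primed words of $\iR^+(w_0^J)$ separately. By Hansson and Hultman's minimal word property \cite[Thm.~4.1]{HanssonHultman}, the unprimed subset $\iR(w_0^J)$ is a single class under the braid relations, the half-braid relations, and the single unprimed relation listed first for each type, all of which lie in $R$. It then remains to show that every primed word of $\iR^+(w_0^J)$ is equivalent, under $R$ restricted to $W_J$, to an unprimed one; I would prove this by induction on the number of primes. To lower the count, run Hansson and Hultman's reshaping chain on the underlying unprimed word toward one of the two canonical involution words occurring in the exceptional relation, lifting each move to the primed setting: a braid move lifts to a primed braid move, and by Lemma~\ref{primed-lem}(a)--(b) any prime caught inside a braid window sits at an endpoint and is carried to the opposite endpoint; a half-braid or exceptional move that avoids the primed letters lifts to its all-unprimed instance; and a prime that reaches the leading letter (forced to be $*$-fixed, hence accessible to the $\mathsf A_1$ form of \eqref{phb2}) or the trailing letter of a canonical word (accessible to one of the two listed exceptional priming relations) is deleted. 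Interleaving these reductions with further reshaping to reposition a stubborn prime --- using the primed braid relations and the instances of \eqref{phb2} attached to the rank-two parabolics of $W_J$ --- eventually eliminates all primes, completing the argument.

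The step I expect to be the main obstacle is exactly this last reduction in the exceptional cases. One must determine, for each of the four diagrams in Figure~\ref{fig1}, which positions of the words in $\iR(w_0^J)$ are commutations, and verify that along Hansson and Hultman's reshaping chain every prime can be routed to the leading letter, to a trailing letter of a canonical word, or to a position governed by \eqref{phb2} for a rank-two parabolic --- so that the two priming relations listed per type, together with the primed braid and primed half-braid relations, really do suffice, which is the precise sense in which the relation set is minimal. The new difficulty, absent in the unprimed setting of \cite{HanssonHultman}, is that half-braid moves cannot carry a prime across their prefix, so the reshaping must be arranged so that primes encounter half-braid and exceptional moves only at the ends of words; making this precise and checking it by hand in ranks $3$ and $4$ is the technical core, and is where, in the words of the introduction, ``some work is required to extend the proofs in \cite{HanssonHultman}.''
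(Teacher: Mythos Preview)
Your reduction to showing that $\iR^+(w_0^J)$ is a single $R$-class for each relevant $J$, and your treatment of the $\mathsf A_1$ and dihedral types, are correct and closely parallel the paper. The divergence is in the exceptional types, where the paper avoids the obstacle you identify rather than confronting it.

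The paper does not attempt to lift an arbitrary Hansson--Hultman chain through $\iR^+(w_0^J)$. Instead it makes a single, concrete verification: for each exceptional type, the two canonical prefixes $u,v$ appearing in the first listed relation are $\sim$-equivalent to \emph{every} way of adding primes to their commutation positions. This is a finite check, and the paper writes out the full chains for $\mathsf{BC}_3$ (the boxed words display all $2^3$ primed versions of each canonical word); the other three types are easier. With this verified, the paper simply reruns the inductive argument from the proof of Theorem~\ref{hh-thm2}, but now using \cite[Thm.~4.1]{HanssonHultman} as the source of the unprimed chain. That argument reduces to a word with a single prime, located at the last position; following the unprimed chain, primed braid moves either push the prime strictly earlier (induction on its position) or one reaches a length-$n$ initial relation. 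In the latter case the word is necessarily $u$ or $v$ with a prime on its last letter, since the only length-$n$ initial relation in \cite[Thm.~4.1]{HanssonHultman} is the exceptional one between $u$ and $v$---and this is exactly the configuration already handled by the explicit check.

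So your ``main obstacle'' (routing primes past half-braid prefixes along a general chain, and verifying that two priming relations per type suffice via an abstract interleaving argument) never has to be faced: the Theorem~\ref{hh-thm2} mechanism guarantees that the only primed configurations requiring attention are the primed versions of $u$ and $v$ themselves, and those are dispatched by a direct computation rather than a structural argument. Your approach could in principle be completed---it is a finite problem in each type---but it is considerably more laborious than the paper's, and the vague ``interleaving'' step is exactly what the paper's reorganization renders unnecessary.
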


This is also a very straightforward extension of Hansson and Hultman's results:
the first relations in each type involve no primed letters and are 
the same as the initial relations listed in \cite[Thm. 4.1]{HanssonHultman}.

\begin{proof}
Consider the two prefixes in the first  initial relation listed for each type.
One can check that $\sim$ relates each word
to any way of adding primes to commutations among its letters. 
Thus in type $^2\mathsf{A}_3$ 
we have 
$({b}',c,a,{b}')\sim ({b},c,a,{b}') \sim ({b},c,a,{b}) \sim  ({b}',c,a,{b})$
and
$(b',c,b,a') \sim(b,c,b,a') \sim (b,c,b,a) \sim (b',c,b,a)$,
while in type $\mathsf{BC}_3$, less trivially, we have 
\[ \ba \boxed{(a',b,c,a',b,a')} &\sim \boxed{(a,b,c,a',b,a')}  \sim(a,b,a',c,b,a') \sim (a,b,a,c,b,a') 
\\&\sim \boxed{(a,b,c,a,b,a')}  \sim \boxed{(a',b,c,a,b,a')} \sim(a,b,c,a,b,a') \\&
\sim\boxed{(a,b,c,a,b,a)}\sim \boxed{(a',b,c,a,b,a)}\sim(a,b,c,a,b,a)
\\&\sim(a,b,a,c,b,a) \sim(a,b,a',c,b,a) \sim \boxed{(a,b,c,a',b,a)} \\&\sim \boxed{(a',b,c,a',b,a)}
\ea\]
and
\[
\ba
\boxed{(a',b,c,b',a,b')}& \sim \boxed{(a,b,c,b',a,b')} \sim (a,c',b,c,a,b') \sim (c',a,b,c,a,b') 
\\&\sim  (c,a,b,c,a,b')  \sim (a,c,b,c,a,b') \sim \boxed{(a,b,c,b,a,b')}
\\&\sim \boxed{(a',b,c,b,a,b')} \sim (a,b,c,b,a,b')\sim \boxed{(a,b,c,b,a,b)}
\\&\sim \boxed{(a',b,c,b,a,b)}\sim (a,b,c,b,a,b) \sim (a,c,b,c,a,b) 
\\&\sim (c,a,b,c,a,b)\sim (c',a,b,c,a,b)\sim (a,c',b,c,a,b)
\\& \sim \boxed{(a,b,c,b',a,b)} \sim \boxed{(a',b,c,b',a,b)}.
\ea
\]
The eight boxed words in these chains of equivalences show all of the different ways of adding primes to the three commutations
in $(a,b,c,a,b,a)$ and $(a,b,c,b,a,b)$.

Checking our claim in the other two types is a similar (and easier) calculation.
Ignoring primes converts  $\sim$ into the transitive closure of the ordinary braid relations and the
extra relations listed in \cite[Thm. 4.1]{HanssonHultman}.
This relation connects all words in $\iR(z)$ \cite[Thm. 4.1]{HanssonHultman}. Thus, repeating the argument in 
the proof of Theorem~\ref{hh-thm2} shows 
that any word in $\iR^+(z)$ is connected by our relation $\sim$ to its unprimed form in $\iR(z)$,
which suffices to prove the result.
\end{proof}

\section{Relations for Hecke words}\label{rel-sect2}

There is another generalization of involution words which is sometimes relevant.
An \defn{involution Hecke word} for $z \in \I_*(W)$ is any finite sequence 
$(s_1,s_2,\dots,s_n)$ with $s_i \in S$ and
$z = s_n^* \circ \cdots \circ s_2^* \circ s_1^*  \circ s_1 \circ s_2 \circ \cdots \circ s_n.$
In type $\mathsf{A}$ when $*=\id$ 
these sequences form the set of \defn{orthogonal Hecke words} for $z$ defined in \cite[\S1.2]{Marberg2019a}.

Let $\iHH(z)$ denote the set of involution Hecke words for $z \in \I_*(W)$. This set is infinite if $z \neq 1$,
and  most of its elements are not reduced words.
However, it is easy to describe a relation spanning its elements.

\begin{proposition}\label{hecke-prop}
Suppose $z \in \I_*(W)$. Then $\iHH(z)$ is an equivalence class under
the transitive closure of the equivalence relation in 
Theorem~\ref{hh-thm}
and the symmetric relations $ (\dash,s,s,\dash)\sim (\dash,s,\dash)$ for each $s \in S$.
\end{proposition}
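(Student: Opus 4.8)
The plan is to show mutual containment of the two equivalence classes: every involution Hecke word for $z$ is connected to a fixed reduced one, and conversely the relations only ever produce words that remain in $\iHH(z)$. The second direction is the easy one: the braid relations for $(W,S)$ and all initial relations preserve each $\iR(z)$ by Theorem~\ref{hh-thm}, and since $\iR(z)\subseteq\iHH(z)$ they preserve membership in $\iHH(z)$ as well; the new relation $(\dash,s,s,\dash)\sim(\dash,s,\dash)$ preserves $\iHH(z)$ because $s^*\circ s^*\circ(\cdots)\circ s\circ s = s^*\circ(\cdots)\circ s$ follows from $s\circ s = s$ and associativity of $\circ$ (applied on both the left-prefix and the right side of the defining product). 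So the transitive closure of all these relations keeps us inside $\iHH(z)$, and $\iR(z)$ is already a single class under a sub-relation, so $\iHH(z)$ is a union of such classes.

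For the forward direction, the key observation is a standard fact about the Demazure monoid: for any finite sequence $(s_1,\dots,s_n)$ with $s_i\in S$, one can, using only the relations $s\circ s = s$ (i.e. $(\dash,s,s,\dash)\sim(\dash,s,\dash)$) together with ordinary braid relations, reduce it to a reduced word for $w:=s_1\circ s_2\circ\cdots\circ s_n$ — this is essentially the statement that the 0-Hecke monoid is presented by the braid relations plus idempotency, combined with Matsumoto--Tits. First I would record this lemma (or cite it), then apply it to an arbitrary involution Hecke word $a=(s_1,\dots,s_n)\in\iHH(z)$. Here the subtlety is that we must operate on $a$ itself, not on the doubled word: applying a braid relation or an idempotency move $(\dash,s,s,\dash)\sim(\dash,s,\dash)$ to $a$ corresponds to applying the same move simultaneously to $(s_1,\dots,s_n)$ on the right of the product in \eqref{circcirc} and (via $*$, which is a monoid automorphism for $\circ$) to the mirror-image subword on the left; in both places the Demazure product is unchanged, so $a$ stays in $\iHH(z)$ throughout. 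Thus we can reduce $a$ to a reduced word $b$ for the element $w\in W$ with $z=(w^{-1})^*\circ w$. Such a $b$ need not have minimal length among reduced words with this property, i.e. $b$ need not lie in $\iR(z)$; but $b$ is a reduced word, and all reduced words for all minimal-length $w'$ with $z=(w'^{-1})^*\circ w'$ form exactly $\iR(z)$, while the reduced words for \emph{all} such $w'$ (minimal or not) are connected to $\iR(z)$ using braid relations and initial relations. This last point — that a non-minimal reduced word producing $z$ via the doubled-Demazure product can be shortened to one in $\iR(z)$ using the Theorem~\ref{hh-thm} relations — is what lets us finish.

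The main obstacle I anticipate is precisely that final shortening step: justifying that a reduced word $b\in\cR(w)$ with $(w^{-1})^*\circ w = z$ but $w$ not of minimal length is connected, via braid relations plus the listed initial relations, to some word in $\iR(z)$. The cleanest route is to invoke the monoid action \eqref{uaction-eq}: $b$ read letter by letter gives the sequence $1\,\underline s_1\,\underline s_2\cdots$, and because $w$ is not minimal there is some prefix where the length drops (equivalently, where $1\underline s_1\cdots\underline s_{i}=1\underline s_1\cdots\underline s_{i-1}$, i.e. $s_i$ is already a right descent of the partial twisted involution). At such a position the exchange/braid machinery — exactly as used in the proof of Proposition~\ref{tprop2} and in Theorem~\ref{hh-thm} — lets us move a repeated generator to the end and delete it using a braid relation followed by $(\dash,s,s,\dash)\sim(\dash,s,\dash)$; iterating strictly decreases the length until we land in $\iR(z)$, after which Theorem~\ref{hh-thm} connects everything. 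Assembling these pieces, $\iHH(z)$ is a single equivalence class, as claimed.
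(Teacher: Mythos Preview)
Your core idea is the same as the paper's: find the first position $i$ where $s_i$ is a right descent of the partial twisted involution $y = s_{i-1}^*\circ\cdots\circ s_1^*\circ s_1\circ\cdots\circ s_{i-1}$, use Theorem~\ref{hh-thm} to rewrite the prefix $(s_1,\dots,s_{i-1})$ into an involution word for $y$ ending in $s_i$, then delete one of the resulting adjacent copies of $s_i$ and induct on length. The paper does exactly this, in four lines, directly on the original word $a$.

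Your detour through the $0$-Hecke presentation --- first reducing $a$ to a reduced word $b$ for $w=s_1\circ\cdots\circ s_n$ --- is unnecessary. Nothing in stage~2 uses that $b$ is reduced; the same prefix-rewriting argument works on $a$ directly, and in fact stage~2 immediately destroys reducedness by creating an adjacent pair $s_i s_i$. So the two-stage organization buys nothing and doubles the work.

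Two local fixes: first, your justification that initial relations preserve $\iHH(z)$ is a non sequitur --- that they preserve the subset $\iR(z)$ says nothing about their effect on the rest of $\iHH(z)$. The correct argument is that if $(s_1,\dots,s_n)\in\iR(w_0^J)$ then $s_n^*\circ\cdots\circ s_1^*\circ s_1\circ\cdots\circ s_n = w_0^J$, so swapping such a prefix for another leaves the full product \eqref{circcirc} unchanged. Second, the parenthetical ``$1\underline s_1\cdots\underline s_i = 1\underline s_1\cdots\underline s_{i-1}$'' is false: the $\underline S$-action \eqref{uaction-eq} never fixes an element. What you want is the condition from \eqref{oo-eq}, namely $s_i^*\circ y\circ s_i = y$, i.e.\ $s_i\in\DesR(y)$; you state this correctly in the surrounding sentence, so just drop the parenthetical.
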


\begin{proof}
Denote this equivalence relation by $\sim$. It
suffices by Theorem~\ref{hh-thm} to show that any $a=(a_1,a_2,\dots,a_n) \in \iHH(z)$ is equivalent 
under $\sim$
to 
an element of $\iR(z)$.
If $a \notin \iR(z)$ then there is a minimal index $i \in \{1,2,\dots,n-1\}$ such that 
$a_{i}$ is a right descent of $a_{i-1}^* \circ \cdots \circ a_2^* \circ a_1^* \circ a_1 \circ a_2 \circ \cdots \circ a_{i-1}$,
in which case $a \sim b$ for some word $b = (b_1,\dots,b_{i}, a_{i},\dots,a_n)$
with $b_i = a_{i}$,
and hence also with $b \sim (b_1,\dots,b_i,a_{i+1},\dots,a_n)$.
By induction on length we may assume that the latter word is equivalent to an element of $\iR(z)$ as needed.
\end{proof}

Let $\iH(z)$ denote the set of involution Hecke words for $z \in \I_*(W)$
that are reduced words (usually for elements of $W$ other than $z$).
Then we have \be\label{iH-eq} \iH(z) = \bigsqcup_{w \in \cB_*(z)} \cR(w)\ee
where $\cB_*(z) := \{ w \in W : (w^{-1})^* \circ w = z\}$ is the set of \defn{Hecke atoms} studied in \cite{HMP2}.
For an example of $\iH(z)$, see  Figure~\ref{hecke4321-fig}.
Another interesting problem is to describe a relation that spans and preserves 
each of these sets.
 
 \begin{figure}[h]
\begin{center}
\input{hecke4321.png.tex}
\end{center}
\caption{Reduced involution Hecke words for   $z=(1,4)(2,3)\in S_4$ with $*=\id$. 
Words are represented here using the same conventions as in Figures~\ref{twisted_invol4321-fig} and \ref{primed_invol4321-fig}.
Grey edges correspond to ordinary braid relations while colored edges correspond to the
\defn{mixed half-braid relations} in the sense of Definition~\ref{mixed-def}.}\label{hecke4321-fig}
\end{figure}

Note that $\iH(z)$ contains $\iR(z)$ and
 is preserved by all ordinary braid relations, but not by the initial relations \eqref{init-rel-eq},
 at least as given in Definition~\ref{init-rel-def}, as these may lead from $\iH(z) \setminus \iR(z)$ to words that are not reduced.
 The appropriate analogue of initial relations for $\iH(z)$ is as follows.
 
 \begin{definition}
Choose  $J \subset S$ with $J=J^*$ such that $W_J = \langle J \rangle$ is finite
with longest element $w_0^J$. Suppose $(s_1,s_2,\dots,s_p)$ and $(t_1,t_2,\dots,t_q)$ are two elements of $\iH(w_0^J)$.
We refer to any relation of the form
\be\label{init-hecke-rel-eq}(s_1,s_2,\dots,s_p,r_1,r_2,\dots,r_k) \sim  (t_1,t_2,\dots,t_q,r_1,r_2,\dots,r_k) \ee
where $(r_1,r_2,\dots,r_k)$ is a reduced word for an element of 
\[^JW := \{ w \in W : \ell(sw)> \ell(w) \text{ for all }s \in J\}\]
as an \defn{initial Hecke relation}, whose
 \defn{type} is the isomorphism class of $(W_J,J,*)$.
\end{definition}

Such relations preserve each of the sets $\iH(z)$ for $z \in \I_*(W)$.

\begin{example}
The initial Hecke relations of type $\mathsf{I}_2(n)$ or $^2 \mathsf{I}_2(n)$ are
\be\label{o-mixed-eq}
  (\underbrace{s,t,s,t,\dots}_{p\text{ terms}},r_1,r_2,\dots,r_k)
\sim (\underbrace{t,s,t,s,\dots}_{q\text{ terms}},r_1,r_2,\dots,r_k)
\ee
where $s,t \in S$ have $\{s^*,t^*\}=\{s,t\}$ and $m(s,t;*) \leq p,q \leq m(s,t)<\infty$ and 
where $(r_1,r_2,\dots,r_k) $ 
is a reduced word for $w \in W$ with $\ell(sw) = \ell(tw) > \ell(w)$.
\end{example}

 \begin{theorem} \label{hecke-thm}
Suppose $z \in \I_*(W)$. Then $\iH(z)$ is an equivalence class under
the transitive closure of the braid relations for $(W,S)$ plus all initial Hecke relations
of type $^2\mathsf{A}_3$, $\mathsf{BC}_3$, $\mathsf{D}_4$, $\mathsf{H}_3$, $\mathsf{I}_2(n)$, or $^2\mathsf{I}_2(n)$ for $2\leq n < \infty$.
\end{theorem}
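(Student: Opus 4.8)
The plan is to follow the template of Theorems~\ref{hh-thm2} and \ref{pr-rel-thm} and reduce to Hansson and Hultman's word property for $\iR(z)$. Write $\sim$ for the equivalence relation generated by the braid relations for $(W,S)$ together with all initial Hecke relations of the listed types. That $\sim$ preserves $\iH(z)$ is the remark made right after the definition of initial Hecke relations: such a relation replaces a prefix lying in $\iH(w_0^J)$ by another prefix of the same kind, followed by a common reduced word for an element of $^JW$, and by \eqref{oo-eq} this keeps the word reduced and leaves $(w^{-1})^*\circ w$ fixed. So the content of the theorem is that $\iH(z)$ is a single $\sim$-class, and I would establish this in two stages: first that $\iR(z)\subseteq\iH(z)$ lies in one $\sim$-class, and then that every element of $\iH(z)$ is $\sim$-equivalent to an element of $\iR(z)$.

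For the first stage, the key observation is that on the subset $\iR(z)$ each ordinary initial relation of one of the listed types is in fact an instance of an initial Hecke relation of the same type. Indeed, if $(s_1,\dots,s_n,c_1,\dots,c_k)\in\iR(z)$ with $(s_1,\dots,s_n)\in\iR(w_0^J)$, then the partial product $s_n^*\circ\cdots\circ s_1^*\circ s_1\circ\cdots\circ s_n$ after $n$ steps equals $w_0^J$; and if some $j\in J$ were a left descent of $c:=c_1\cdots c_k$, then braid relations would carry the word to one in $\iR(z)$ whose letter in position $n+1$ is $j\in J=\DesR(w_0^J)$, contradicting the characterization of involution words through distinctness of partial products recalled after \eqref{uaction-eq}. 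Hence $(c_1,\dots,c_k)$ is a reduced word for an element of $^JW$, so the relation $(s_1,\dots,s_n,\dash)\sim(t_1,\dots,t_n,\dash)$, applied inside $\iR(z)$, is an initial Hecke relation. Since by Theorem~\ref{hh-thm} the braid relations and the ordinary initial relations of the listed types connect all of $\iR(z)$, the set $\iR(z)$ lies in one $\sim$-class.

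For the second stage I would induct on the length $n$ of $a\in\iH(z)$, equivalently on $\ell(\sigma)$ for the underlying Hecke atom $\sigma:=a_1\cdots a_n\in\cB_*(z)$; if $a\in\iR(z)$ there is nothing to prove. Otherwise $\sigma$ is non-minimal, and the crucial input is a structural claim of the following shape: every non-minimal atom $\sigma\in\cB_*(z)$ admits a length-additive factorization $\sigma=\pi\rho$ with $\rho\in{}^JW$ and with $\pi$ a non-minimal element of $\cB_*(w_0^J)$, for some $J=J^*\subseteq S$ such that $(W_J,J,*)$ has type $^2\mathsf{A}_3$, $\mathsf{BC}_3$, $\mathsf{D}_4$, $\mathsf{H}_3$, $\mathsf{I}_2(m)$, or $^2\mathsf{I}_2(m)$. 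Granting this, braid relations carry $a$ to the concatenation of a reduced word for $\pi$ with a reduced word for $\rho$; an initial Hecke relation of that type then replaces the first block by a reduced word for a minimal element $\pi'\in\cB_*(w_0^J)$; and the resulting word is a reduced word for $\pi'\rho$, which again lies in $\cB_*(z)$ because $(\rho^{-1})^*\circ w_0^J\circ\rho$ does not depend on the chosen atom of $w_0^J$, and which has strictly smaller length than $\sigma$. The induction hypothesis then finishes the argument, and combined with the first stage this proves the theorem.

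The main obstacle is the structural claim, which is the Hecke-word analogue of the technical lemma that underlies \cite[Thm. 4.1]{HanssonHultman} and requires a case analysis parallel to theirs. I would prove it by fixing any reduced word $a=(a_1,\dots,a_n)$ for $\sigma$ and letting $i$ be the least index at which $a_i$ lies in the right descent set of the partial product $z_{i-1}:=a_{i-1}^*\circ\cdots\circ a_1^*\circ a_1\circ\cdots\circ a_{i-1}$, so that $(a_1,\dots,a_{i-1})\in\iR(z_{i-1})$ and $s:=a_i\in\DesR(z_{i-1})$ while $s\notin\DesR(a_1\cdots a_{i-1})$. Using Hansson and Hultman's word property for $\iR(z_{i-1})$ together with \eqref{oo-eq}, \eqref{m-eq}, and Propositions~\ref{tprop1} and \ref{tprop2}, one shows that this failure of reducedness forces $s$ into a $*$-invariant parabolic $W_K$ in which two involution words of $w_0^K$ are not connected by braid and half-braid relations alone; the classification of finite Coxeter systems with disconnected complement graph then pins $K$ down to one of the listed types. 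To complete the factorization one passes to the maximal left $W_K$-factor $\pi$ of $\sigma$ --- this is an element of $\cB_*(w_0^K)$ by repeated use of \eqref{oo-eq}, since the generators beyond $a_1\cdots a_i$ that it absorbs all lie in $\DesR(w_0^K)$ --- and takes $\rho$ to be the complementary factor, which lies in $^KW$ by the parabolic decomposition. The remaining work is routine bookkeeping: verifying that the braid-exposure and length-reduction steps stay inside $\iH(z)$, and checking, as in \cite{HanssonHultman}, that each of the finitely many exceptional types $^2\mathsf{A}_3$, $\mathsf{BC}_3$, $\mathsf{D}_4$, $\mathsf{H}_3$ (and the two dihedral families) genuinely contributes an initial Hecke relation that cannot be derived from the others.
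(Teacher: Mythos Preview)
Your first stage is fine and is essentially what the paper also relies on implicitly: on $\iR(z)$ the ordinary initial relations of the listed types are special cases of initial Hecke relations, so Theorem~\ref{hh-thm} already shows $\iR(z)$ lies in a single $\sim$-class.

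The second stage, however, diverges from the paper and has a real gap. You assert a structural claim about non-minimal atoms --- that any such $\sigma\in\cB_*(z)$ factors length-additively as $\pi\rho$ with $\pi$ a non-minimal element of $\cB_*(w_0^J)$ for $J$ of one of the listed types --- but your sketch does not establish it. Finding the first index $i$ where $(a_1,\dots,a_i)$ ceases to be an involution word tells you that some initial relation is needed inside $\iR(z_{i-1})$, but it does not place the \emph{entire} prefix of $\sigma$ inside a small parabolic $W_K$; the letters $a_1,\dots,a_{i-1}$ can range over all of $S$. Passing to the maximal left $W_K$-factor $\sigma_K$ of $\sigma$ does not help either: there is no reason that $(\sigma_K^{-1})^*\circ\sigma_K$ should equal $w_0^K$, nor that $\sigma_K$ should be non-minimal in $\cB_*(w_0^K)$, since $\sigma_K$ is determined only by the parabolic decomposition of $\sigma$ and carries no memory of the involution-word failure at position $i$. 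As stated, the claim may well be true, but it is at least as hard as the theorem itself and would require a separate argument comparable in difficulty to Hansson and Hultman's full analysis.

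The paper's proof avoids this entirely by a different and much lighter induction: strip off the \emph{last} letter $a_n$, apply the inductive hypothesis to the shorter word $b=(a_1,\dots,a_{n-1})\in\iH(y)$ to obtain a chain $b=b^0\to\cdots\to b^l\in\iR(y)$, and then try to re-append $a_n$ to each $b^i$. Either the chain survives with $a_n$ appended, or it first breaks at some step $b^{i-1}\to b^i$ that was an initial Hecke relation; in the latter case the obstruction is precisely that $(r_1,\dots,r_k,a_n)$ is reduced but not in $^JW$, so braid relations pull some $s\in J$ to the front of this suffix, and $(s_1,\dots,s_p,s)$ is still in $\iH(w_0^J)$. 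One more initial Hecke relation then shortens the word and shows $y=z$, closing the induction. This sidesteps any global factorization statement about atoms.
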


\begin{proof}
Denote the given equivalence relation by $\sim$. Given $b,c \in \iH(z)$, write $b \to c$ if $\ell(b) \geq \ell(c)$ and $b\sim c$  
such that the two words differ by a sequence of braid relations or a single initial Hecke relation.

Suppose $a = (a_1,a_2,\dots,a_n) \in \iH(z)$.
By Theorem~\ref{hh-thm} it suffices to show that $a$ is equivalent under $\sim$ to an element of $\iR(z)$.
We will prove a more specific claim:    there are words $a^0, a^1, a^2, \dots, a^l $ with 
\[a = a^0 \to a^1 \to a^2\to \cdots \to a^l \in \iR(z).\]
This is clear if $n=0$. Assume $n>0$, let $b:=(a_1,a_2,\dots,a_{n-1})$, and write
$y $ for the element of $\I_*(W)$ with $b  \in \iH(y)$.
By induction we may assume that there are words $b^i $ with
$b = b^0 \to b^1 \to b^2\to \cdots \to b^l \in \iR(y)$.
For each $i$ let $a^i$ be the word formed by adding $a_n$ to the end of $b^i$.

Suppose there exists a minimal $i  \in \{1,2,\dots,l\}$ such that $a^{i-1} \to a^i$ fails to hold.
Then $b^{i-1}$ and $b^i$ must differ by a single initial Hecke relation, so we may assume that 
these words are the left and right sides of \eqref{init-hecke-rel-eq} with $p\geq q$.
Since $i$ is minimal, we have $a=a^0 \sim a^{i-1}$ so $a^{i-1} \in \iH(z)$, which means that the subword $(r_1,r_2,\dots,r_k,a_n)$
must be a reduced word for an element of $W$ that is not in $^JW$.

As $(r_1,r_2,\dots,r_k)$ is itself a reduced word for an element of $^JW$,
it follows (e.g., from   \cite[Lem. 1.2.6]{GP}) that $(r_1,r_2,\dots,r_k,a_n)$
 is connected by a sequence of braid relations to 
 $(s,  r_1,  r_2,\dots,  r_k)$ for some $s \in J$ with 
 $  (s_1,s_2,\dots,s_p, s) \in \iH(w_0^J)$.
Therefore
$ a^{i-1} \to (s_1,\dots,s_p, s,  r_1,\dots,  r_k) \to b^i$,
which implies that $y=z$, so on setting
$\tilde a^i := (s_1,\dots,s_p, s,  r_1,\dots,  r_k)$ we have \[ a=a^0\to \dots \to a^{i-1} \to \tilde a^i \to b^i \to \dots \to b^l \in \iR(y) = \iR(z) \]
as predicted by our claim.

If no such $i$ exists and $a^l \in \iR(z)$ then $a^0 \to a^1 \to \dots \to a^l$ as needed.
If $a^l \notin \iR(z)$ then $a_n$ must be a right descent of $y$. In this case
we may assume by Theorem~\ref{hh-thm} that $b^l$ ends in $a_n$, so 
there must exist an index $i$ where $a^{i-1} \to a^i$ fails,
and we can apply the argument above to deduce our claim.
\end{proof}

Like Theorem~\ref{hh-thm2}, the preceding result
includes more relations than are necessary to generate an equivalence relation spanning the sets of interest.

\begin{definition}\label{mixed-def}
The \defn{mixed half-braid relations} 
for the twisted Coxeter system $(W,S,*)$ are the  relations on words with all letters in $S$ of the form
\be\label{mixed-eq}
  (\underbrace{s,t,s,t,\dots}_{p\text{ terms}},r_1,r_2,\dots,r_k)
\sim (\underbrace{s,t,s,t,\dots}_{p+1\text{ terms}},r_1,r_2,\dots,r_k)
\ee
where
 $s,t \in S$ are such that $m(s,t;*) \leq p < m(s,t)<\infty$ and 
  $(r_1,r_2,\dots,r_k) $ is a reduced word
for some $w \in W$ with $\ell(sw) = \ell(tw)>\ell(w)$.
 \end{definition}
 
 Note that $m(s,t;*) < m(s,t) <\infty$ implies that $\{s^*,t^*\} = \{s,t\}$.
The transitive closure of the mixed half-braid relations and the usual braid relations
include all of the type $\mathsf{I}_2(n)$ or $^2 \mathsf{I}_2(n)$  initial Hecke relations \eqref{o-mixed-eq}.

We may now give a version of 
Theorem~\ref{pr-rel-thm} for reduced involution Hecke words.
As in that result, we assume below that the 
  Coxeter graph for any subsystem $(W_J,J,*)$ of type
 $^2\mathsf{A}_3$, $\mathsf{BC}_3$, $\mathsf{D}_4$, or $\mathsf{H}_3$
is labeled as in Figure~\ref{fig1}.

\begin{theorem}\label{pr-rel-hecke-thm}
Suppose $z \in \I_*(W)$. Then $\iH(z)$ is an equivalence class under 
the transitive closure of the braid relations, the mixed half-braid relations,
and the symmetric relation with the following properties for each $J=J^*\subseteq S$:
\begin{itemize}

\item If $(W_J,J,*)$ is of type $^2\mathsf{A}_3$ then 
\[ \ba (b,c,a,b ,\dash) 
&\sim ( b,c,b,a,\dash)\sim ( b,c,b,a,b,\dash).\ea
\]

\item If $(W_J,J,*)$ is of type $\mathsf{BC}_3$ then 
\[\ba
(a,b,c,a,b,a,\dash) &\sim
 (a,b,c,b,a,b,\dash) \sim (a,b,c,b,a,b,a,\dash).
\ea
\]

\item  If $(W_J,J,*)$ is of type $\mathsf{D}_4$ then 
\[\ba
( d,b,a,c,b,a,c,d,\dash) &\sim
( d,b,a,c,b,a,d,c,\dash) \\&\sim ( d,b,a,c,b,a,d,c,d,\dash).
\ea
\]

\item  If $(W_J,J,*)$ is of type $\mathsf{H}_3$ then 
\[\ba
( a,c,b,a,c,b,a,b,c,\dash) &\sim
(  a,c,b,a,c,b,a,c,b,\dash) \\&\sim (  a,c,b,a,c,b,a,c,b,c,\dash)\\&\sim (  a,c,b,a,c,b,a,c,b,a,\dash)\\&\sim (  a,c,b,a,c,b,a,c,b,a,b,\dash).
\ea
\]

\end{itemize}
The meaning of the symbols ``$\dash$'' here is slightly different than above: 
in each sequence of relations, this symbol stands for an arbitrary reduced word 
 for an element of $^JW$ (rather than an arbitrary word as previously).
\end{theorem}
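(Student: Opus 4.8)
The plan is to deduce the theorem from Theorem~\ref{hecke-thm} by showing that the equivalence relation $\sim$ described in the statement and the equivalence relation in Theorem~\ref{hecke-thm} generate the same partition of the set of reduced words. For the inclusion ``$\sim$ is contained in the relation of Theorem~\ref{hecke-thm}'' I would check that each generating relation of $\sim$ is a composite of braid relations and initial Hecke relations of the six listed types. This is immediate for braid relations; for the mixed half-braid relations it is the observation recorded just before Definition~\ref{mixed-def}; and for the four families of exceptional relations the first step in each chain is the minimal Hansson--Hultman initial relation of the corresponding type (an initial Hecke relation since $\iR(w_0^J)\subseteq\iH(w_0^J)$), while each length-increasing step such as $(b,c,b,a,\dash)\sim(b,c,b,a,b,\dash)$ is verified to be an initial Hecke relation by a direct computation in the relevant rank-$\leq 4$ Coxeter group, confirming that both the shorter and the longer sequence are reduced words with involution--Demazure product equal to $w_0^J$. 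Granting this, Theorem~\ref{hecke-thm} shows each $\sim$-class is contained in a single set $\iH(z)$.

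For the reverse inclusion it suffices, again by Theorem~\ref{hecke-thm}, to show that $\sim$ contains all braid relations (clear) and all initial Hecke relations of types $^2\mathsf{A}_3$, $\mathsf{BC}_3$, $\mathsf{D}_4$, $\mathsf{H}_3$, $\mathsf{I}_2(n)$, $^2\mathsf{I}_2(n)$. The two dihedral families are generated by braid and mixed half-braid relations by the same observation used above. For the four exceptional types, such a relation has the form $(s_1,\dots,s_p,r_1,\dots,r_k)\sim(t_1,\dots,t_q,r_1,\dots,r_k)$ with $(s_1,\dots,s_p),(t_1,\dots,t_q)\in\iH(w_0^J)$ and $(r_1,\dots,r_k)$ a reduced word for some $w\in{}^JW$. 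Here I would isolate a parabolic compatibility fact: if $(u_1,\dots,u_l)\in\iH(w_0^J)$ then $(u_1,\dots,u_l,r_1,\dots,r_k)$ lies in a single fixed set $\iH(z)$ independent of $u$, and any braid relation, internal mixed half-braid relation, or listed exceptional relation connecting two words of $\iH(w_0^J)$ remains a valid instance of the corresponding $\sim$-relation after the suffix $(r_1,\dots,r_k)$ is appended. (An exceptional relation is necessarily applied here with empty $^JW_J$-tail, because a reduced word for an element of $W_J$ that also lies in $^JW$ must be empty; and the descent hypothesis on the tail of a mixed half-braid relation is preserved by length-additivity of the relevant parabolic factorizations.) This reduces the whole reverse inclusion to a finite verification: for $J$ of type $^2\mathsf{A}_3$, $\mathsf{BC}_3$, $\mathsf{D}_4$, or $\mathsf{H}_3$, the set $\iH(w_0^J)$ is connected by braid relations, internal mixed half-braid relations, and the listed exceptional relations.

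To perform that finite verification I would connect the length-$\rho_*(w_0^J)$ words, which are exactly $\iR(w_0^J)$, by the minimal Hansson--Hultman relations \cite[Thm. 4.1]{HanssonHultman} (these are the first steps of the listed exceptional relations), and then show that every longer reduced Hecke word for $w_0^J$ can be shortened by one letter using braid relations to bring a suitable alternating run to the front followed by a mixed half-braid relation, or else, in the cases where this fails, by one of the length-increasing exceptional relations read in reverse. The inductive bookkeeping that converts ``$\iH(w_0^J)$ is connected'' into ``the listed relations span'' should be modeled on the proof of Theorem~\ref{hecke-thm}, with the added care that the surgery step must stay among reduced words. I expect the main obstacle to be precisely that last point: confirming that for these four specific types the two or three listed exceptional relations, together with braids and mixed half-braid relations, really do suffice to drive every reduced Hecke word for $w_0^J$ back down into $\iR(w_0^J)$ --- this is the Hecke-word analogue of Hansson and Hultman's case analysis of which finite types force an exceptional relation, and in practice it amounts to the same sort of explicit equivalence-chain computations carried out in the proof of Theorem~\ref{pr-rel-thm}.
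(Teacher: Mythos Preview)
Your reduction to a finite verification in each of the four exceptional types is correct and is exactly the strategy of the paper. The paper's execution of that finite step is organized a bit differently from your proposed shortening induction: rather than working with all of $\iH(w_0^J)$, the paper passes to the set $\cB_*(w_0^J)$ of Hecke atoms for $w_0^J$ and observes that braid relations together with mixed half-braid relations induce an equivalence $\approx$ on $\cB_*(w_0^J)$; the sets $\cB_*(w_0^J)$ have sizes $7,13,29,37$ in types $^2\mathsf{A}_3,\mathsf{BC}_3,\mathsf{D}_4,\mathsf{H}_3$ and split into $3,3,3,5$ classes under $\approx$, and one then checks by computer that each class contains an element with a reduced word equal to one of the listed prefixes. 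Your inductive shortening would accomplish the same thing, but the atom-level framing keeps the bookkeeping smaller and makes the computer verification transparent.
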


One can translate this result into a description of an equivalence relation on the group $W$
classifying the sets $\cB_*(z) $ in \eqref{iH-eq}.
In types $\mathsf{A}_n$ and $\mathsf{BC}_n$ the theorem can be used in this way to recover \cite[Thm. 6.4]{HMP2}
and \cite[Thm. 9.4]{HM2021}.

\begin{proof}
Let $\approx$ be the transitive closure of the relation on $W$ that connects two elements if they have reduced words that differ by a mixed half-braid relation.
In types $^2\mathsf{A}_3$, $\mathsf{BC}_3$, $\mathsf{D}_4$, and $\mathsf{H}_3$ the respective sizes of 
the sets $\cB_*(w_0^J)$ are 7, 13, 29, and 37.
These sets are divided by $\approx$ into  3, 3, 3, and 5 equivalence classes, respectively.
It suffices to check that each equivalence class has an element with a reduced word that matches
one of the prefixes of the relations given in the relevant type.
This, along with our other observations in this proof, follows by a straightforward (finite) computer calculation. 
\end{proof}

\section{Simply braided systems}\label{app-sect1}

We say that a twisted Coxeter system $(W,S,*)$ is \defn{simply braided} if no subsystem $(W_J,J,*)$ with $J=J^*\subseteq S$
is of type $^2\mathsf{A}_3$, $\mathsf{BC}_3$, $\mathsf{D}_4$, or $\mathsf{H}_3$.
As Hansson and Hultman observe in \cite[Cor. 5.1]{HanssonHultman}, this occurs precisely when
each set $\iR(z)$ is spanned by just the braid relations
 and half-braid relations.
Theorems~\ref{hh-thm2} and \ref{hecke-thm} imply a similar fact about $\iR^+(z)$ and $\iH(z)$.  

\begin{theorem}
The following properties are equivalent:
\ben
\item[(a)] The twisted Coxeter system $(W,S,*)$ is simply braided.

\item[(b)] Each set
 $\iR(z)$  for $z \in \I_*(W)$ is an equivalence class for the transitive closure of the braid relations
\eqref{braid-eq} and half-braid relations \eqref{i2-eq1} and \eqref{i2-eq2}.

\item[(c)] Each set
 $\iR^+(z)$  for $z \in \I_*(W)$ is an equivalence class for the transitive closure of the primed braid relations
and primed half-braid relations in Definitions~\ref{pbr-def} and \ref{pbr-half-def}.

\item[(d)] Each set
 $\iH(z)$ for $z \in \I_*(W)$ is an equivalence class for the transitive closure of the braid relations  \eqref{braid-eq}
and mixed half-braid relations  \eqref{mixed-eq}.
\een
\end{theorem}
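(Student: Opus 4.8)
The plan is to prove $(a)\Rightarrow(c)$, $(c)\Rightarrow(b)$, $(a)\Rightarrow(d)$, and $(d)\Rightarrow(a)$, and to combine these with the equivalence $(a)\Leftrightarrow(b)$, which is \cite[Cor. 5.1]{HanssonHultman}. The implications $(a)\Rightarrow(c)$ and $(a)\Rightarrow(d)$ are immediate: in Theorems~\ref{pr-rel-thm} and \ref{pr-rel-hecke-thm} the only relations beyond the primed braid and primed half-braid relations (respectively, the braid and mixed half-braid relations) are indexed by subsets $J=J^*\subseteq S$ for which $(W_J,J,*)$ has type $^2\mathsf{A}_3$, $\mathsf{BC}_3$, $\mathsf{D}_4$, or $\mathsf{H}_3$; when $(W,S,*)$ is simply braided no such $J$ exists, so those extra relations never apply and the remaining relations already span each $\iR^+(z)$, respectively each $\iH(z)$.

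For $(c)\Rightarrow(b)$ I would introduce the unpriming map $\pi\colon\iR^+(z)\to\iR(z)$ that deletes every prime from a word; it fixes every element of $\iR(z)\subseteq\iR^+(z)$, and its image lies in $\iR(z)$. Comparing Definitions~\ref{pbr-def} and \ref{pbr-half-def} with the braid and half-braid relations, one checks term by term that $\pi$ carries every primed braid relation to a braid relation, carries every primed half-braid relation of the form \eqref{phb1} to the half-braid relation \eqref{i2-eq1} or \eqref{i2-eq2} with which it already coincides (its alternating length $m(s,t;*)$ matches the number of factors there), and carries every primed half-braid relation of the form \eqref{phb2} to a trivial identity, since the two sides of \eqref{phb2} agree once all primes are removed. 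Hence, assuming $(c)$, given $z\in\I_*(W)$ and $a,b\in\iR(z)$, a chain of primed braid and primed half-braid moves connecting $a$ to $b$ inside $\iR^+(z)$ maps under $\pi$ to a chain of braid and half-braid moves connecting $a=\pi(a)$ to $b=\pi(b)$ entirely within $\iR(z)$; so $\iR(z)$ is a single equivalence class for the braid and half-braid relations, which is $(b)$.

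For $(d)\Rightarrow(a)$ I would argue by contraposition. Suppose $(W,S,*)$ is not simply braided and fix $J=J^*\subseteq S$ with $(W_J,J,*)$ of type $^2\mathsf{A}_3$, $\mathsf{BC}_3$, $\mathsf{D}_4$, or $\mathsf{H}_3$; take $z=w_0^J$. For $w\in\cB_*(w_0^J)$ we have $w\le (w^{-1})^*\circ w=w_0^J$, so $w\in W_J$; thus every reduced word occurring in $\iH(w_0^J)=\bigsqcup_{w\in\cB_*(w_0^J)}\cR(w)$, and every braid or mixed half-braid relation applicable to one, involves only generators from $J$. Since braid relations connect all of each $\cR(w)$ and mixed half-braid relations preserve each $\iH(z)$, the equivalence classes of $\iH(w_0^J)$ under the braid and mixed half-braid relations are exactly the sets $\bigsqcup_{w\in C}\cR(w)$ with $C$ a class of the relation $\approx$ on $\cB_*(w_0^J)$ considered in the proof of Theorem~\ref{pr-rel-hecke-thm}. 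That proof records that $\approx$ partitions $\cB_*(w_0^J)$ into $3$, $3$, $3$, and $5$ classes in the four respective types, so $\iH(w_0^J)$ is split into more than one class and $(d)$ fails; by contraposition $(d)\Rightarrow(a)$.

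The main obstacle is the direction $(d)\Rightarrow(a)$: one must reduce to the finite subsystem $(W_J,J,*)$ and then invoke the explicit computation from the proof of Theorem~\ref{pr-rel-hecke-thm} showing that the braid and mixed half-braid relations alone do not connect $\cB_*(w_0^J)$ in each of the four exceptional types. The case checking that $\pi$ respects the primed relations in $(c)\Rightarrow(b)$ is routine, and the remaining implications are formal consequences of the results of Sections~\ref{rel-sect1} and \ref{rel-sect2}.
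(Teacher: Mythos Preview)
Your argument is correct. The paper does not supply a formal proof for this theorem: it simply notes that $(a)\Leftrightarrow(b)$ is \cite[Cor.~5.1]{HanssonHultman} and says that Theorems~\ref{hh-thm2} and \ref{hecke-thm} ``imply a similar fact about $\iR^+(z)$ and $\iH(z)$,'' leaving the converse directions implicit. Your proof supplies exactly the missing details. The implications $(a)\Rightarrow(c)$ and $(a)\Rightarrow(d)$ are the direct consequences the paper has in mind (via Theorems~\ref{pr-rel-thm} and \ref{pr-rel-hecke-thm} once the exceptional subsystems are excluded). Your route for the converses is the interesting part: deducing $(c)\Rightarrow(b)$ by pushing a chain of primed moves in $\iR^+(z)$ through the unpriming map $\pi$, and proving $(d)\Rightarrow(a)$ by contraposition, reducing to $(W_J,J,*)$ and invoking the explicit count of $\approx$-classes recorded in the proof of Theorem~\ref{pr-rel-hecke-thm}. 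Both reductions are handled carefully (e.g., your check that $\cB_*(w_0^J)\subseteq W_J$ via $w\leq (w^{-1})^*\circ w=w_0^J$, and that mixed half-braid relations applicable to $J$-words coincide with those of the subsystem). This is a valid and somewhat more explicit completion than the paper's one-line justification.
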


This theorem significantly strengthens \cite[Prop. 1.10]{Marberg2017},
which is equivalent to just the assertion that (b) $\Rightarrow$ (d).
 
We define a twisted Coxeter system $(W,S,*)$ to be \defn{irreducible} if $*$ acts transitively on the 
components of the relevant Coxeter graph.
This occurs if and only if $W$ is irreducible or has two irreducible factors interchanged by $*$.
Comparing the definition of simply braided with the classification of the Coxeter graphs of positive type in \cite{Humphreys}
yields the following observation about irreducible systems:

 \begin{proposition}
Suppose
$(W,S,*)$ is irreducible of finite or affine type. 
Then $(W,S,*)$ is simply braided if and only if 
either $s^* \neq s$ for all $s \in S$;
$*=\id$ and $W$ has type ${\mathsf{A}}_n$ or $\tilde {\mathsf{A}}_n$;
or 
$W$ has type $\tilde {\mathsf{A}}_2$, $\tilde {\mathsf{C}}_2$, $\tilde {\mathsf{G}}_2$, or ${\mathsf{I}}_2(n)$ for $3 \leq n \leq \infty$.
 \end{proposition}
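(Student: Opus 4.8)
The plan is to prove the equivalence by direct comparison with the classification of finite and affine Coxeter diagrams, leaning on two elementary facts about the four forbidden types $^2\mathsf{A}_3$, $\mathsf{BC}_3$, $\mathsf{D}_4$, $\mathsf{H}_3$: each has rank at least $3$, and each carries at least one $*$-fixed simple generator (the middle node in type $^2\mathsf{A}_3$, and every node in the remaining three types, since there $*$ is the identity). I would first dispose of the case in which $W$ is reducible as a Coxeter group: since $(W,S,*)$ is irreducible, $*$ must then interchange two isomorphic components, so $s^*\neq s$ for all $s\in S$, which puts $(W,S,*)$ into the first family of the list. Hence I may assume $W$ is irreducible whenever $*$ fixes some simple generator.

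For the ``if'' direction I would check the three families in turn. If $s^*\neq s$ for every $s$, then for any $J=J^*$ and any generator of $W_J$ the same inequality holds, so $(W_J,J,*)$ has no $*$-fixed generator and cannot be a forbidden type. If $*=\mathrm{id}$ and $W$ has type $\mathsf{A}_n$ or $\tilde{\mathsf{A}}_n$, then every standard parabolic is either the whole (infinite) group $\tilde{\mathsf{A}}_n$ or a direct product of Coxeter groups of type $\mathsf{A}$, none of which is isomorphic to $\mathsf{BC}_3$, $\mathsf{D}_4$, or $\mathsf{H}_3$, while $^2\mathsf{A}_3$ is excluded because $*$ restricts to the identity. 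If $W$ has type $\tilde{\mathsf{A}}_2$, $\tilde{\mathsf{C}}_2$, $\tilde{\mathsf{G}}_2$, or $\mathsf{I}_2(n)$, then $|S|\leq 3$; the only subset of rank at least $3$ is $S$ itself, which generates the infinite group $W$ and so is not of a (necessarily finite) forbidden type, and subsets of rank at most $2$ are trivially not of a rank-$3$ type. In every case $(W,S,*)$ is simply braided.

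For the ``only if'' direction I would prove the contrapositive. Suppose $(W,S,*)$ is irreducible, of finite or affine type, and not in the list; then the first family is not in play, so $*$ fixes some generator, and by the reduction above $W$ is irreducible. Consider first the case $*\neq\mathrm{id}$, where $W$ is in particular not of type $\tilde{\mathsf{A}}_2$ or $\tilde{\mathsf{C}}_2$. The $*$-fixed and $*$-moved generators partition $S$ into two nonempty sets, so by connectedness of the Coxeter graph some $*$-fixed generator $s_0$ is adjacent to a $*$-moved generator $s$; then $s^*\neq s$ is also moved and, as $*$ is a graph automorphism fixing $s_0$, is adjacent to $s_0$ with $m(s_0,s^*)=m(s_0,s)$. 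Since a finite or affine Coxeter diagram other than $\tilde{\mathsf{A}}_2$ has no triangle, $m(s,s^*)=2$, so $\{s,s_0,s^*\}$ spans a path; and if the common label $m(s_0,s)$ were at least $4$ then $s_0$ would meet two edges of label $\geq 4$, which among finite and affine diagrams happens only in $\tilde{\mathsf{C}}_2$, contrary to hypothesis. Hence the label is $3$, the set $\{s,s_0,s^*\}$ is $*$-invariant, and $(W_{\{s,s_0,s^*\}},\{s,s_0,s^*\},*)$ has type $^2\mathsf{A}_3$, so $(W,S,*)$ is not simply braided.

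It remains to treat $*=\mathrm{id}$: then $W$ is an irreducible finite or affine Coxeter group whose type is none of $\mathsf{A}_n$, $\tilde{\mathsf{A}}_n$, $\tilde{\mathsf{C}}_2$, $\tilde{\mathsf{G}}_2$, $\mathsf{I}_2(n)$, and I would run through the remaining diagrams. If the diagram has a branch node, that node together with its three neighbors spans a $\mathsf{D}_4$ (this covers $\mathsf{D}_n$, $\mathsf{E}_{6,7,8}$, $\tilde{\mathsf{D}}_n$, $\tilde{\mathsf{E}}_{6,7,8}$); if it has an edge of label $4$ in a diagram of at least three nodes, that edge together with a suitably chosen adjacent node spans a $\mathsf{BC}_3$ (this covers $\mathsf{B}_n$, $\mathsf{F}_4$, $\tilde{\mathsf{B}}_n$, $\tilde{\mathsf{C}}_n$, $\tilde{\mathsf{F}}_4$); and if it has an edge of label $5$ it already contains an $\mathsf{H}_3$ (types $\mathsf{H}_3$ and $\mathsf{H}_4$). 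Since $*=\mathrm{id}$, each such parabolic is $*$-invariant, so $(W,S,*)$ is not simply braided, completing the contrapositive. The hardest step will be this last enumeration: one must verify, against the classification of positive-type Coxeter diagrams, that $\mathsf{A}_n$, $\tilde{\mathsf{A}}_n$, $\tilde{\mathsf{C}}_2$, $\tilde{\mathsf{G}}_2$, and $\mathsf{I}_2(n)$ are exactly the irreducible finite and affine diagrams with no branch node, no label-$4$ edge in a diagram of at least three nodes, and no edge of label at least $5$; by comparison the $*\neq\mathrm{id}$ case is handled uniformly, the only diagram-specific inputs being that $\tilde{\mathsf{A}}_2$ and $\tilde{\mathsf{C}}_2$ are the unique finite or affine diagrams containing a triangle, respectively a vertex incident to two edges of label $\geq 4$.
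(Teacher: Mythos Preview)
Your proof is correct and follows exactly the approach the paper indicates: a type-by-type comparison with the classification of finite and affine Coxeter diagrams, which the paper leaves entirely to the reader. One minor imprecision in your closing summary: the listed types are not \emph{exactly} those with no branch node, no label-$4$ edge in rank $\geq 3$, and no edge of label $\geq 5$ (for instance $\tilde{\mathsf{C}}_2$, $\tilde{\mathsf{G}}_2$, and $\mathsf{I}_2(n)$ for $n\geq 5$ fail these conditions yet appear in the list), but your actual enumeration only uses, and correctly establishes, the direction that is needed---that every irreducible finite or affine type \emph{not} in the list has one of these three features.
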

 




For each $w \in W$, there is a natural connected graph with vertex set $\cR(w)$,
in which two reduced words form an edge if they differ by a single braid relation.
The properties of this \defn{reduced word graph} (in particular, its diameter) 
have been studied 
for finite Coxeter systems in several places \cite{DahlbergKim,DehAut,GaetzGao,ReinerRoichman}. 

There are three similar graphs one can associate to the finite vertex sets $\iR(z)$, $\iR^+(z)$, and $\iH(z)$,
which we call the \defn{involution word graph}, \defn{primed involution word graph},
and \defn{involution Hecke word graph} of $z \in \I_*(W)$.
In the (primed) involution word graph, 
each edge corresponds to a single (primed) braid relation or (primed) half-braid relation;
in the involution Hecke word graph, each edge corresponds to a single braid relation or mixed half-braid relation.
We have already seen examples of these graphs in Figures~\ref{twisted_invol4321-fig}, \ref{primed_invol4321-fig}, and \ref{hecke4321-fig}.
They are always connected if $(W,S,*)$ is simply braided,
and their properties may be of independent interest. At a minimum,
such graphs often make for interesting pictures. Figures~\ref{invol54321-fig},
\ref{primed_invol54321-fig},
\ref{twisted54321-fig}, and 
\ref{primed_twisted54321-fig} show some larger examples.

\begin{figure}[h]
\begin{center}
\input{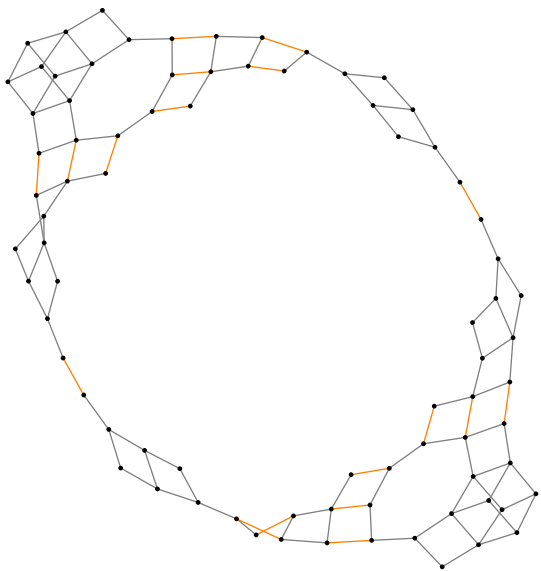}
\end{center}
\caption{Involution word graph for the longest element in the twisted Coxeter system of type $\mathsf{A}_4$. Grey edges correspond
to ordinary braid relations. 
}
\label{invol54321-fig}
\end{figure}

\begin{figure}[h]
\begin{center}
\input{primed_invol54321.png.tex}
\end{center}
\caption{Primed involution word graph for the longest element in the twisted Coxeter system of type $\mathsf{A}_4$. Grey edges correspond
to ordinary braid relations. 
}\label{primed_invol54321-fig}
\end{figure}

\begin{figure}[h]
\begin{center}
\input{twisted54321.png.tex}
\end{center}
\caption{Involution word graph for the longest element in the twisted Coxeter system of type $^2\mathsf{A}_4$. Grey edges correspond
to ordinary braid relations.
}\label{twisted54321-fig}
\end{figure}

\begin{figure}[h]
\begin{center}
\input{primed_twisted54321.png.tex}
\end{center}
\caption{Primed involution word graph for the longest element in the twisted Coxeter system of type $^2\mathsf{A}_4$. Grey edges correspond
to ordinary braid relations. 
}\label{primed_twisted54321-fig}
\end{figure}

\section{Relations in type A}\label{app-sect2}

Fix an integer $n\geq 2$ and let $[n] := \{1,2,\dots,n\}$.
%
For each $i \in \ZZ$ let $s_i$ denote the permutation of $\ZZ$
 that interchanges $i+nk$ and $i+1+nk$ for each $k \in \ZZ$ while fixing all other integers. 
Then 
$\{s_1,s_2,\dots,s_n\}$
is a Coxeter generating set for the \defn{affine symmetric group} $\tS_n$
of bijections $w : \ZZ \to \ZZ$ with
 $
 w(i+n)= w(i) +n$ for all $i \in \ZZ $ and $ \sum_{i\in [n]} w(i) = \sum_{i \in [n]} i.
 $
 The corresponding length function $\ell :\tS_n \to \NN$ has $\ell(ws_i) = \ell(w) - 1$ if and only if $w(i) > w(i+1)$.

In this final section, we discuss what our general results reduce to 
when
$W=\tS_n$,  $S= \{s_1,s_2,\dots,s_n\}$, and $*=\id$.
All statements here apply equally well to the finite symmetric group $S_n\cong\langle s_1,s_2,\dots,s_{n-1}\rangle\subset \tS_n$. 

Write 
$\cR_{\mathsf{inv}}(z) := \iR(z) $ and define $ \cR^+_{\mathsf{inv}}(z)$, 
$ \mathcal{H}_{\mathsf{inv}}(z) $, and $ \cHinvred(z) $ analogously.
We 
abbreviate $(s_{a_1}, s_{a_2},\dots,s_{a_l})$
as the word $a_1a_2\cdots a_l$ with $a_i \in [n]$,
and write elements of $\cR^+_{\mathsf{inv}}(z) $ as words 
with letters in $\{ 1' < 1<\dots<n'<n\}$.

\begin{lemma}\label{fixed-lem}
If $a_1a_2\cdots a_l$ is an involution word for $z=z^{-1} \in \tS_n$, then $i \in [l]$ is a commutation
if and only if $a_i$ and $1+a_i$ are fixed points of 
$ y:=s_{i-1} \circ \cdots \circ s_2 \circ s_1 \circ s_2 \circ \cdots \circ s_{i-1},$
in which case $(a_i, 1+a_i)$ is a cycle of $s_{a_i} \circ y \circ s_{a_i}$.
\end{lemma}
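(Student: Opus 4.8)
The statement is a type-$\mathsf{A}$ specialization of the general definition of commutation, combined with the formula \eqref{oo-eq} for the action of $s^*\circ z\circ s$ (here $s^*=s$ since $*=\id$). The plan is to unwind both sides using the description of involution words in terms of the monoid action \eqref{uaction-eq} and the explicit combinatorics of $\tS_n$. First I would set $y = s_{a_{i-1}}\circ\cdots\circ s_{a_1}\circ s_{a_1}\circ\cdots\circ s_{a_{i-1}}$, which by the remarks after \eqref{oo-eq} (the ``two other properties'' paragraph) equals $1\,\underline s_{a_1}\underline s_{a_2}\cdots\underline s_{a_{i-1}} \in \I_*(\tS_n) = \I(\tS_n)$, so $y=y^{-1}$. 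By definition $i$ is a commutation in $a_1a_2\cdots a_l$ exactly when $s_{a_i}y = y s_{a_i}$, i.e. $\Adstar_y(s_{a_i}) = s_{a_i}$ in the notation of the excerpt, equivalently $y s_{a_i} y^{-1} = s_{a_i}$.

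Next I would translate the commutation condition $ys_{a_i}y = s_{a_i}$ into a statement about the bijection $y:\ZZ\to\ZZ$. Writing $j := a_i$, the permutation $s_j$ is the product of transpositions $(j+nk,\,j+1+nk)$ over $k\in\ZZ$. Conjugating, $ys_jy^{-1}$ is the product of transpositions $(y(j+nk),\,y(j+1+nk))$. Since $y\in\tS_n$ satisfies $y(m+n)=y(m)+n$, this conjugate equals $s_j$ if and only if $\{y(j),\,y(j+1)\} = \{j,\,j+1\}$ as an unordered pair (modulo the periodicity, checking one fundamental domain suffices). Because $i$ being a commutation also forces $\ell(ys_j)=\ell(y)+1$, i.e. $y(j)<y(j+1)$ by the length formula for $\tS_n$, the only possibility is $y(j)=j$ and $y(j+1)=j+1$; the alternative $y(j)=j+1$, $y(j+1)=j$ would give $\ell(ys_j) = \ell(y)-1$. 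So $i$ is a commutation iff $j=a_i$ and $j+1 = 1+a_i$ are both fixed points of $y$. This is the main content; the key obstacle is being careful that the unordered-pair condition together with the descent/length condition pins things down, and that the periodicity of $\tS_n$ does not introduce spurious solutions (e.g. $y(j) = j+1+nk$ for $k\neq 0$ is incompatible with $y\in\tS_n$ when $y(j+1)$ must also land in $\{j,j+1\}+n\ZZ$ consistently).

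Finally, for the last clause: assuming $a_i=j$ with $y(j)=j$, $y(j+1)=j+1$, I would compute $s_j\circ y\circ s_j$ directly. By \eqref{oo-eq}, since $\ell(y) < \ell(ys_j)$ and $ys_j = s_jy$ (the commutation case), we have $s_j\circ y\circ s_j = ys_j$. As a bijection of $\ZZ$, $ys_j$ acts by first applying $s_j$ then $y$; on the fundamental domain it sends $j\mapsto j+1\mapsto j+1$ and $j+1\mapsto j\mapsto j$, while agreeing with $y$ elsewhere. Hence $(j,\,j+1) = (a_i,\,1+a_i)$ is a $2$-cycle of $s_{a_i}\circ y\circ s_{a_i}$, as claimed. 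I expect the whole argument to be short once the translation to the action on $\ZZ$ is in place; no genuinely hard step is anticipated beyond the bookkeeping just described.
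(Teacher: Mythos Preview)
Your proposal is correct and follows essentially the same approach as the paper's proof: translate the commutation condition $ys_{a_i}=s_{a_i}y$ into the combinatorial condition $\{y(a_i),y(a_i+1)\}=\{a_i,a_i+1\}$, then use that $s_{a_i}$ is not a right descent of $y$ (since the word is an involution word) to rule out the case $y(a_i)=a_i+1$. Your write-up is more detailed than the paper's two-sentence version---in particular you spell out the cycle computation for $s_{a_i}\circ y\circ s_{a_i}=ys_{a_i}$ and note that the involution property $y=y^{-1}$ excludes any periodicity shift---but the content is the same.
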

\begin{proof}
A generator $s_a$ commutes with $y$ if and only if either 
$y(a) = a+1$ and $y(a+1) = a$, or $y(a) = a$ and $y(a+1) = a+1$. The first case cannot occur when $a=a_i$
as then $s_a$ would be a right descent of $y$.
 \end{proof}

An involution word in any type has no adjacent repeated letters,
so a primed involution word has no consecutive subwords of the form
$aa$, $a'a$, $aa'$, or $a'a'$. 

\begin{proposition}
A primed involution word for $z=z^{-1}\in \tS_n$ 
has no consecutive subwords of the form
$a'(a+1)'$, $(a+1)'a'$, $ab'a$, $a'b'a$, $a'ba'$, $ab'a'$, or $a'b'a'$
and does not begin with any consecutive subwords of the form
$ a(a+1)'$, $ (a+1)a'$, $aba$, $a'ba$, or $aba'$,
replacing $a+1$ by $1$ if $a=n$.
Such a word may only contain $aba$, $a'ba$, or $aba'$ 
as consecutive non-initial subwords if $b -a \in \{-1,1\} +n\ZZ$.
\end{proposition}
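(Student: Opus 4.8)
The statement is really a collection of forbidden-pattern assertions, and each follows from combining the \emph{primed braid relations} (Definition~\ref{pbr-def}), the combinatorial description of commutations in type $\mathsf{A}$ given by Lemma~\ref{fixed-lem}, and the structural constraints of Lemma~\ref{primed-lem}. The unifying principle is simple: a primed letter $a'$ in a primed involution word must sit at an index $i$ that is a commutation, and by Lemma~\ref{fixed-lem} this forces both $a$ and $a+1$ to be fixed points of the intermediate involution $y = s_{i-1}\circ\cdots\circ s_1\circ s_1\circ\cdots\circ s_{i-1}$, with $(a,a+1)$ becoming a $2$-cycle of $s_a\circ y\circ s_a$. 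So the plan is to take each forbidden subword in turn and derive a contradiction by examining what the intermediate involutions and their fixed-point/cycle structure would have to be, or by reducing the subword via primed braid relations to one containing an adjacent repeat $aa$, $a'a$, $aa'$, or $a'a'$ (already excluded by the remark preceding the proposition).

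\textbf{Key steps in order.} First I would handle the patterns that do not depend on whether the subword is initial, dividing them into two families. Family one: $a'(a+1)'$ and $(a+1)'a'$. Here both letters are commutations, so at the relevant index $i$, $a$ and $a+1$ are fixed by the intermediate involution $y$, but after applying $s_a$ (or $s_{a+1}$) the point $a+1$ (resp.\ $a$) is moved into a $2$-cycle with its neighbor; the very next step tries to declare that neighbor index a commutation as well, contradicting the cycle structure just created (this is essentially the $n=2$ obstruction in Lemma~\ref{primed-lem}(b) specialized to $m(s_a,s_{a+1})=3$, which forces $a\pm1\equiv$ something impossible). Family two: $ab'a$, $a'b'a$, $a'ba'$, $ab'a'$, $a'b'a'$. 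For these I would argue that the inner letter $b$ must satisfy $b-a\in\{-1,1\}+n\ZZ$ if it is to have any chance of being part of a valid word (the same analysis as the final sentence of the proposition, below), and then observe that at least one of the three positions carries a prime in a location incompatible with the commutation requirement --- specifically, if the middle position $b$ is primed then $b$ and $b+1$ are fixed by the intermediate involution, but with $b=a\pm1$ this contradicts $a$ already having been moved; and if an outer $a$ is primed, then combining with the braid relation $(a,b,a)\sim(b,a,b)$ when $m(s_a,s_b)=3$ converts the primed outer $a$ into a primed $b$ adjacent to an unprimed $b$, producing $b'b$ or $bb'$. Second, I would treat the initial subwords $a(a+1)'$, $(a+1)a'$, $aba$, $a'ba$, $aba'$: when the word starts with these, the intermediate involution before the primed (or repeated) position is computed explicitly from a short prefix, and one checks directly via Lemma~\ref{fixed-lem} that the fixed-point condition fails, or uses the type $\mathsf{A}_1$ / $\mathsf{I}_2(3)$ primed (half-)braid relations to reduce to an earlier-excluded initial pattern. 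Finally, for the last sentence, I would show that if $aba$, $a'ba$, or $aba'$ appears as a non-initial consecutive subword, then $s_a$ and $s_b$ must satisfy $m(s_a,s_b)<\infty$ (otherwise $(\dots,a,b,a)$ cannot extend by any braid or half-braid relation preserving reducedness), and in $\tS_n$ this means $m(s_a,s_b)=3$, i.e.\ $b-a\in\{-1,1\}+n\ZZ$; the case $m(s_a,s_b)=2$ is excluded because then $aba$ would braid to $aab$, giving an adjacent repeat.

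\textbf{Main obstacle.} The bookkeeping hazard is that these patterns interact with both the primed braid relations \emph{and} the commutation condition, and one must be careful that a reduction via a primed braid relation genuinely stays within $\iR^+(z)$ (it does, by Lemma~\ref{primed-lem} and the paragraph after Definition~\ref{pbr-def}, but the specific relation invoked --- $(\dash,s',t,\dash)\sim(\dash,t,s',\dash)$ for $m(s,t)=2$, versus \eqref{rel4} for the odd/even cases --- has to match the parity of $m(s,t)$ correctly). The genuinely delicate case is distinguishing initial from non-initial occurrences: the proposition allows $aba$ with $b=a\pm1$ only away from the start, so the argument for the initial case must produce a contradiction that the non-initial argument does \emph{not}, and this hinges on the fact that an initial subword $aba$ would make $a$ a right descent of the empty-prefix involution $1$, which is impossible since $1$ has no descents --- whereas for a non-initial occurrence the intermediate involution $y$ can perfectly well have $a$ as a right descent. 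I expect the initial/non-initial dichotomy, and getting its proof to be uniform rather than a long case check, to be where most of the real work lies; everything else is a mechanical application of Lemma~\ref{fixed-lem} together with the remark excluding adjacent repeats.
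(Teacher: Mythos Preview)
Your overall strategy---invoke Lemmas~\ref{primed-lem} and \ref{fixed-lem}---is the right one, and your argument for the two-letter patterns $a'(a+1)'$ and $(a+1)'a'$ via the fixed-point/cycle description of Lemma~\ref{fixed-lem} is correct. But two of your sketched sub-arguments contain real errors.

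First, your treatment of $a'ba'$ is wrong. You claim that applying the primed braid relation to $a'ba'$ ``converts the primed outer $a$ into a primed $b$ adjacent to an unprimed $b$, producing $b'b$ or $bb'$.'' It does not: the primed braid relation for $m(s_a,s_b)=3$ is $(a',b,a)\sim(b,a,b')$, and by Lemma~\ref{primed-lem}(c) the pattern $a'ba'$ (if it existed) would transform to $b'ab'$, with no adjacent repeats. The correct argument is a direct appeal to Lemma~\ref{primed-lem}(b): once you have established $m(s_a,s_b)=3$ (which you do correctly, since $aba$ in any reduced word forces this), part (b) says that the first and last positions of the length-$3$ subword cannot both be commutations unless $m=2$, an immediate contradiction. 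You never invoke part (b), and without it your family-two argument does not close. Similarly, for the four middle-primed patterns, rather than re-deriving the obstruction through fixed-point bookkeeping, you should simply cite Lemma~\ref{primed-lem}(a): the middle position of a length-$m(s,t)$ alternating subword is never a commutation.

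Second, your explanation for why \emph{initial} $aba$ fails---``$a$ would be a right descent of the empty-prefix involution $1$''---is confused. The identity has no right descents, but that is irrelevant: the problem is that after the prefix $(a,b)$ the intermediate involution is $s_b\circ s_a\circ s_a\circ s_b=s_bs_as_b=s_as_bs_a$, which already has $s_a$ as a right descent, so the third letter $a$ fails to extend. The paper does exactly this Demazure-product computation. Your alternative via the half-braid relation $(a,b,\dash)\sim(b,a,\dash)$ when $m=3$, giving $(a,b,a,\dash)\sim(b,a,a,\dash)$ with an adjacent repeat, is valid and would also work---but the ``right descent of $1$'' framing is simply incorrect. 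The paper's proof is a three-line direct citation of the two lemmas plus one Demazure calculation; your route is sound in outline but the details above need fixing before it goes through.
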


\begin{proof}
The seven consecutive subwords are forbidden by Lemmas~\ref{primed-lem} and \ref{fixed-lem}.
The first two initial subwords are forbidden as $s_{a}$ and $s_{a+1}$ do not commute.
The last three initial subwords are forbidden since no (unprimed) involution word can begin with $aba$, for
 $s_a \circ s_b \circ s_ a \circ s_a \circ s_b \circ s_a$ is either 
$s_as_b $ if $s_a$ and $s_b$ commute 
or else $s_bs_as_b$, which in either case is equal to $s_b \circ s_ a \circ s_a \circ s_b$.
The last claim holds since $s_a\circ s_b \circ s_a = s_a\circ s_b$ if $b-a \notin \{-1,1\}+n\ZZ$.
\end{proof}

Define $\simA$ to be the transitive closure of the 
symmetric relation on words with all letters in $\{ 1' < 1< 2'<2<\dots<n'<n\}$
that has
\be 
(a,\dash) \simA (a',\dash)
\quand
(a,b,\dash) \simA (b,a,\dash)
\ee
for all $a,b \in [n]$, along with
\be
\ba
 (\dash,a,b,\dash) &\simA (\dash,b,a,\dash), \\
  (\dash,a,b',\dash) &\simA (\dash,b',a,\dash),  \text{ and } \\
   (\dash,a',b',\dash) &\simA (\dash,b',a',\dash)
   \ea
   \ee
   for all $a,b \in [n]$ with $a-b \notin \{-1,0,1\} + n\ZZ$,
   and finally with
   \be
   \ba
   (\dash, a,b,a,\dash) &\simA (\dash, b,a,b,\dash) \text{ and }
   \\
   (\dash, a',b,a,\dash) &\simA (\dash, b,a,b',\dash) 
   \ea
   \ee
   for all $a,b\in [n]$ with $a-b \in \{-1,1\} + n\ZZ$. Below, let $z=z^{-1} \in \tS_n$.
   Theorems~\ref{hh-thm} and \ref{hh-thm2} and Proposition~\ref{hecke-prop} applied to type $\tilde{\mathsf{A}}_{n-1}$
   have this corollary:
   
\begin{corollary}
The set $\cR_{\mathsf{inv}}^+(z)$ is an equivalence class under $\simA$.
The set $\cR_{\mathsf{inv}}(z)$ is an equivalence class for the restriction of  $\simA$
 to unprimed words,
%
%
while $\cH_{\mathsf{inv}}(z)$ is an equivalence class under the transitive closure of 
 the same restriction
and the symmetric relations $(\dash, a,a,\dash)\sim(\dash,a,\dash)$. 
\end{corollary}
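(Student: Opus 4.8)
The plan is to deduce this corollary directly from the general theorems by specializing to the twisted Coxeter system $(W,S,*) = (\tS_n, \{s_1,\dots,s_n\}, \id)$ and identifying each abstract relation with its concrete counterpart in the definition of $\simA$. The first step is to establish that $(\tS_n, \{s_1,\dots,s_n\}, \id)$ is simply braided in the sense of Section~\ref{app-sect1}: every parabolic subgroup generated by a proper subset $J = J^* \subseteq S$ is of affine or finite type $\mathsf{A}$, so no subsystem of type $^2\mathsf{A}_3$, $\mathsf{BC}_3$, $\mathsf{D}_4$, or $\mathsf{H}_3$ can occur (all of these have disconnected complement of their Coxeter graph, which the path/cycle graphs of type $\mathsf{A}$ and $\tilde{\mathsf{A}}$ do not for $n \geq 2$). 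This puts us in the situation of Theorem~\ref{pr-rel-thm} and Theorem~\ref{pr-rel-hecke-thm}, where only the braid, half-braid, primed braid, primed half-braid, and mixed half-braid relations are needed.

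Next I would translate each family of relations. For two generators $s_a, s_b \in \{s_1,\dots,s_n\}$ in type $\tilde{\mathsf{A}}_{n-1}$, one has $m(s_a,s_b) = 1$ if $a \equiv b$, $m(s_a,s_b) = 3$ if $a - b \in \{-1,1\} + n\ZZ$ (with the convention identifying $n+1$ and $1$), and $m(s_a,s_b) = 2$ otherwise; moreover $s_a = s_a^*$ always, so $m(s_a,s_b;*) = \tfrac12 m(s_a,s_b) + \tfrac12 = 2$ when $m(s_a,s_b) = 3$, by \eqref{m-eq}. Hence the primed braid relations of Definition~\ref{pbr-def}: the $m = 2$ commuting relations give $(\dash,a,b,\dash) \simA (\dash,b,a,\dash)$ and $(\dash,a,b',\dash)\simA(\dash,b',a,\dash)$ and $(\dash,a',b',\dash)\simA(\dash,b',a',\dash)$ for $a - b \notin \{-1,0,1\}+n\ZZ$, while the $m=3$ relations give $(\dash,a,b,a,\dash)\simA(\dash,b,a,b,\dash)$ and $(\dash,a',b,a,\dash)\simA(\dash,b,a,b',\dash)$ for $a-b \in \{-1,1\}+n\ZZ$. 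The primed half-braid relations of Definition~\ref{pbr-half-def} with $s = s^*$, $t = t^*$ reduce to: the $m(s,t) = 3$ instance of \eqref{phb2}, which is $(\dots,t,s,t',\dash)\simA(\dots,t,s,t,\dash)$ — but since $m(s,t;*) = 2$ this is just $(b,a,b',\dash)$ style, already covered, and more importantly the type $\mathsf{A}_1$ instance $s = t$, which is $(a,\dash)\simA(a',\dash)$. The type $\mathsf{I}_2(n)$ half-braid \eqref{phb1} with $n = m(s,t)$ contributes $(s,t,\dash)\simA(t,s,\dash)$ when $m = 2$ (a special commuting case) and for $m = 3$ becomes $(a,b,\dash)\simA(b,a,\dash)$, which after stripping the "$\dash$" is precisely the initial relation $(a,b,\dash)\simA(b,a,\dash)$ appearing in the definition of $\simA$; combined with $(a,\dash)\simA(a',\dash)$ this recovers all the first-letter moves. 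Assembling these, the transitive closure of the primed braid and primed half-braid relations is exactly $\simA$, so $\cR^+_{\mathsf{inv}}(z)$ is an equivalence class under $\simA$ by Theorem~\ref{pr-rel-thm}, and restricting to unprimed words and applying Theorem~\ref{hh-thm} (or Theorem~\ref{pr-rel-thm} without primes) gives the statement for $\cR_{\mathsf{inv}}(z)$.

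For the last assertion about $\cH_{\mathsf{inv}}(z)$, I would apply Proposition~\ref{hecke-prop}, which says $\iHH(z)$ is an equivalence class under the transitive closure of the relation from Theorem~\ref{hh-thm} together with the relations $(\dash,s,s,\dash)\sim(\dash,s,\dash)$. The relation from Theorem~\ref{hh-thm} restricted to type $\tilde{\mathsf{A}}_{n-1}$ is, as just argued, the restriction of $\simA$ to unprimed words, and the generator-repetition relations $(\dash,s_a,s_a,\dash)\sim(\dash,s_a,\dash)$ translate verbatim to $(\dash,a,a,\dash)\sim(\dash,a,\dash)$. So $\cH_{\mathsf{inv}}(z)$ is an equivalence class under the transitive closure of (unprimed $\simA$) and $(\dash,a,a,\dash)\sim(\dash,a,\dash)$, exactly as claimed.

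\textbf{Main obstacle.} The substantive bookkeeping step is the second paragraph: verifying that the concrete relation $\simA$ generates precisely the same equivalence as the abstract collection of primed braid and primed half-braid relations — in particular, being careful about (i) the edge case $n = 2$, where $m(s_1,s_2) = \infty$ and the graph $\tilde{\mathsf{A}}_1$ has a double edge, so one must check $\simA$ still matches (here there are no braid relations on $\{1,2\}$ at all, consistent with $a - b \notin \{-1,0,1\}+n\ZZ$ being vacuous and $a-b \in \{-1,1\}+n\ZZ$ also failing, so the only two-letter moves are the initial $(a,b,\dash)\simA(b,a,\dash)$ coming from the $^2\mathsf{I}_2$-type... actually from $\mathsf{I}_2(\infty)$ there is no relation, so one should double-check that the claimed $(1,2,\dash)\simA(2,1,\dash)$ is genuinely needed and correct — it follows from $(1,\dash)\simA(1',\dash)$ and, hmm, this is exactly the point requiring care); and (ii) confirming that the "$m(s,t;*) \leq p \leq m(s,t)$" range in the mixed/primed half-braid relations collapses correctly when $m(s,t) = 3$ and $m(s,t;*) = 2$, so that no relations beyond those listed in $\simA$ are secretly required. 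Handling the small rank $n=2$ and the cyclic $a \leftrightarrow a+1 \bmod n$ conventions cleanly is where I expect to spend the most effort.
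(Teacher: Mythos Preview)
Your approach is correct and matches the paper's, which simply cites Theorems~\ref{hh-thm}, \ref{hh-thm2}, and Proposition~\ref{hecke-prop} and specializes to type~$\tilde{\mathsf{A}}_{n-1}$. One small correction to your translation: with $*=\id$ there is \emph{no} $m(s,t)=3$ instance of \eqref{phb2}, since that clause requires either $m(s,t)$ even or $s^*=t\neq s$; the only contribution of \eqref{phb2} here is the degenerate $s=t$ case $(a,\dash)\sim(a',\dash)$, while the initial swap $(a,b,\dash)\sim(b,a,\dash)$ for adjacent $a,b$ comes entirely from \eqref{phb1}.

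Your instinct about $n=2$ is right, and the obstacle is not merely bookkeeping: the corollary as stated is actually false for $W=\tS_2$. There $m(s_1,s_2)=\infty$, so the general theorems produce no braid or half-braid relations whatsoever, yet $\simA$ (because $1-2\in\{-1,1\}+2\ZZ$) still declares $(1,2,\dash)\simA(2,1,\dash)$ and $(\dash,1,2,1,\dash)\simA(\dash,2,1,2,\dash)$. But $(1,2)\in\cR_{\mathsf{inv}}(s_2s_1s_2)$ while $(2,1)\in\cR_{\mathsf{inv}}(s_1s_2s_1)$, and these are distinct involutions in $\tS_2$. So the statement should be read with $n\geq 3$ in the affine case; your argument goes through cleanly there (and for all finite $S_n$).
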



Next define $\equivA$ to be the transitive closure of the 
symmetric relation on words with all letters in $[n]$
that has
\be
(\dash, a,b,\dash) \equivA (\dash,b,a,\dash)
\ee
for all $a,b \in [n]$ with $a-b \notin \{-1,0,1\} + n\ZZ$, along with
  \be\label{aff1}
   (\dash, a,b,a,\dash) \equivA (\dash, b,a,b,\dash) 
   \ee
   for all  $a,b \in [n]$ with $a-b \in \{-1,1\} + n\ZZ$, and finally with
  \be\label{aff2}
   (a,b,c_1,c_2,\dots,c_k)  \equivA (a,b,a,c_1,c_2,\dots,c_k)
   \ee
   for all  $a,b \in [n]$ with $a-b \in \{-1,1\} + n\ZZ$
   and  reduced words $(c_1,c_2,\dots,c_k)$ for permutations $w \in \tS_n$
   with $w^{-1}(a) < w^{-1}(a+1) $ and $w^{-1}(b) < w^{-1}(b+1) $.
   Note that combining \eqref{aff1} and \eqref{aff2} gives
   \be\label{aff3}
      (a,b,c_1,c_2,\dots,c_k)  \equivA (b,a,c_1,c_2,\dots,c_k).
      \ee
By applying Theorem~\ref{hecke-thm} to type $\tilde{\mathsf{A}}_{n-1}$, we obtain:

\begin{corollary}
The set $\cHinvred(z)$ is an equivalence class under $\equivA$.
\end{corollary}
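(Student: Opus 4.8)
The plan is to deduce this corollary directly from Theorem~\ref{hecke-thm} applied to $(W,S,*) = (\tS_n, \{s_1,\dots,s_n\}, \id)$, by checking that the relation $\equivA$ coincides, on the relevant words, with the transitive closure of the braid relations for $\tS_n$ together with all initial Hecke relations of type $^2\mathsf{A}_3$, $\mathsf{BC}_3$, $\mathsf{D}_4$, $\mathsf{H}_3$, $\mathsf{I}_2(n)$, or $^2\mathsf{I}_2(n)$. Since $*=\id$ and $W=\tS_n$ has Coxeter graph $\tilde{\mathsf{A}}_{n-1}$, the only finite parabolic subsystems of those types that occur are $\mathsf{I}_2(2) = \mathsf{A}_1\times\mathsf{A}_1$ (from commuting pairs $s_a, s_b$ with $a - b \notin \{-1,0,1\}+n\ZZ$) and $\mathsf{I}_2(3) = \mathsf{A}_2$ (from adjacent pairs $s_a, s_b$ with $a - b \in \{-1,1\}+n\ZZ$); in particular the system $(\tS_n, S, \id)$ is simply braided when $n \geq 3$, so no type $^2\mathsf{A}_3$, $\mathsf{BC}_3$, $\mathsf{D}_4$, or $\mathsf{H}_3$ relations arise. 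Thus the right-hand side of Theorem~\ref{hecke-thm} reduces to: ordinary braid relations (commutations $s_as_b \sim s_bs_a$ for non-adjacent $a,b$, and the length-three braid relations $s_as_bs_a \sim s_bs_as_b$ for adjacent $a,b$) plus the initial Hecke relations of type $\mathsf{I}_2(3)$, which by the displayed formula \eqref{o-mixed-eq} read $(a,b,r_1,\dots,r_k) \sim (a,b,a,r_1,\dots,r_k)$ where $m(s_a,s_b;\id) = 2 \leq 2, 3 \leq m(s_a,s_b) = 3$ and $(r_1,\dots,r_k)$ is a reduced word for some $w$ with $\ell(s_aw) = \ell(s_bw) > \ell(w)$.

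First I would record the explicit translation: $\ell(s_aw) > \ell(w)$ means $w^{-1}(a) < w^{-1}(a+1)$ and $\ell(s_bw) > \ell(w)$ means $w^{-1}(b) < w^{-1}(b+1)$, using the stated formula for the length function of $\tS_n$. This is exactly the side condition in \eqref{aff2}, so the type $\mathsf{I}_2(3)$ initial Hecke relations are precisely the moves \eqref{aff2}. Next I would observe that \eqref{aff1} is literally the length-three braid relation for an adjacent pair, and the commutations in $\equivA$ for $a-b \notin \{-1,0,1\}+n\ZZ$ are literally the ordinary commutation braid relations; conversely every ordinary braid relation for $\tS_n$ is among the generators of $\equivA$. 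Hence the generators of $\equivA$ generate exactly the same equivalence relation as the generators on the right-hand side of Theorem~\ref{hecke-thm}, so their transitive closures agree. Applying Theorem~\ref{hecke-thm} then gives that $\iH(z) = \cHinvred(z)$ is a single equivalence class under $\equivA$, which is the claim.

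One point that deserves a sentence of care is the mismatch in the form of the two braid-relation conventions: the braid relations \eqref{braid-eq} as stated allow arbitrary context ``$\dash$'' on both sides, whereas the generators of $\equivA$ list commutations with two-sided context but write the length-three move \eqref{aff1} and the insertion move \eqref{aff2} with only a one-sided (suffix) context. Since we are taking transitive closures, the two-sided braid relation $(\dash, a,b,a,\dash) \sim (\dash, b,a,b,\dash)$ is generated by the one-sided version \eqref{aff1} together with commutations only in the presence of a prefix that can be ``commuted past''; in general, though, \eqref{aff1} needs no prefix restriction in Theorem~\ref{hecke-thm} because the braid relations there are unrestricted. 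So strictly I should note that the set of generators of $\equivA$ is contained in the set of Theorem~\ref{hecke-thm}'s generators (every $\equivA$-generator is either an ordinary braid relation or a type $\mathsf{I}_2(3)$ initial Hecke relation), and conversely every ordinary braid relation of $\tS_n$ is an $\equivA$-generator; the type $\mathsf{I}_2(3)$ initial Hecke relations with nonempty prefix before the ``$s,t,s,t,\dots$'' block are then recovered by combining an $\equivA$-commutation move on the prefix with the suffix-form relation \eqref{aff2}, exactly as the definition of $\equivA$ already permits since its commutation generators have two-sided context. I expect the main (and only real) obstacle to be bookkeeping this equivalence-of-generating-sets cleanly, including spelling out why the ``$r_i$''-context condition $\ell(s_aw)=\ell(s_bw)>\ell(w)$ matches the two descent inequalities on $w^{-1}$ in \eqref{aff2}; everything else is a direct invocation of Theorem~\ref{hecke-thm} together with the classification of rank-$2$ and rank-$3$ parabolic subsystems of $\tilde{\mathsf{A}}_{n-1}$.
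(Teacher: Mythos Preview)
Your approach is correct and matches the paper's, which simply says ``By applying Theorem~\ref{hecke-thm} to type $\tilde{\mathsf{A}}_{n-1}$, we obtain'' the corollary. Two small corrections to your exposition: the relation $(a,b,r)\sim(a,b,a,r)$ you write is the mixed half-braid relation \eqref{mixed-eq}, not an instance of \eqref{o-mixed-eq} (whose two sides begin with \emph{different} letters), so it is cleaner either to invoke Theorem~\ref{pr-rel-hecke-thm} directly or to note that the four cases of \eqref{o-mixed-eq} with $2\le p,q\le 3$ are all generated by \eqref{aff1} and \eqref{aff2}; and your final paragraph rests on misreadings --- \eqref{aff1} already carries two-sided context $(\dash,a,b,a,\dash)$, and initial Hecke relations by definition begin at position one --- so the bookkeeping worry there is a non-issue and that paragraph can simply be deleted.
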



If  $[w_1,w_2,\dots,w_n]$ is an integer sequence
with $w_i -w_j \notin n\ZZ$ for all $1\leq i < j \leq n$
then there is a unique $w \in \tS_n$ such that 
$w_i = w(i+d)$ for all $i \in [n]$ for some $d \in \ZZ$.
We identify $[w_1,w_2,\dots,w_n]$ with this element $w \in \tS_n$.

Given $z =z^{-1} \in \tS_n$, let $a_1<a_2<\dots<a_l$ be the numbers in $[n]$
with $a_i \leq z(a_i)$ and define $\alpha_{\min}(z)\in\tS_n$ to be the inverse of the element
given by $[z(a_1), a_1, z(a_2), a_2,\dots, z(a_l),a_l]$ with all duplicate entries removed.
If $n = 5$ and $z = s_2s_3s_2$
then 
$\alpha_{\min}(z) = [1,4,2,3,5]^{-1} = [1,3,4,2,5] \in \tS_5$.

\begin{proposition}
If $z=z^{-1} \in \tS_n$ then the set 
$\cB(z) := \left\{ w \in \tS_n : w^{-1} \circ w = z\right\}$
is the equivalence class of $\alpha_{\min}(z)$ under the transitive closure of 
the symmetric relation on $\tS_n$ that has $u^{-1} \sim v^{-1} \sim w^{-1}$
whenever 
$ u= [\dash, c,b,a,\dash]$, $v=  [\dash, c,a,b,\dash]$, and $w=  [\dash, b,c,a,\dash]$
for some integers $a<b<c$, where the corresponding dashes are identical subwords.
\end{proposition}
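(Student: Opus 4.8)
The plan is to deduce this proposition from the final corollary concerning $\cHinvred(z)$, by translating the relation $\equivA$ on reduced words of elements of $\cB(z)$ into the asserted "patience-sorting" relation on the elements themselves, written in one-line (window) notation. First I would recall that $\cB(z) = \cB_*(z)$ in the notation of \eqref{iH-eq} (with $*=\id$), so that $\iH(z) = \bigsqcup_{w\in\cB(z)}\cR(w)$, and that by the corollary $\cHinvred(z) = \iH(z)$ is a single equivalence class under $\equivA$. The key observation is that the first relation defining $\equivA$, namely commuting $a,b$ with $a - b\notin\{-1,0,1\}+n\ZZ$, is just an ordinary braid relation, so it never moves between the sets $\cR(w)$; the relations \eqref{aff1} and \eqref{aff2}—equivalently \eqref{aff3}—are the ones that carry one $\cR(w)$ to another. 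Thus the equivalence $\approx$ on $\cB(z)$ induced by $\equivA$ (two elements of $\cB(z)$ are related if they have reduced words differing by \eqref{aff1}, \eqref{aff2}, \eqref{aff3}) has $\cB(z)$ as a single class, and I must show this $\approx$ coincides with the transitive closure of the stated three-term relation $u\sim v\sim w$ on windows, and that $\alpha_{\min}(z)$ indeed lies in $\cB(z)$.

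Next I would make precise the dictionary between \eqref{aff3} and the window relation. If $w^{-1}$ has a reduced word beginning with $b,a$ where $a - b\in\{-1,1\}+n\ZZ$ (say $a = b+1$ up to the $n\ZZ$ shift), then $w$ has $w^{-1}(a) > w^{-1}(a+1)$ and $w^{-1}(b) > w^{-1}(b+1)$ — no, rather: a reduced word for $w^{-1}$ beginning with $s_b s_a$ means $\ell(s_a w^{-1}) < \ell(w^{-1})$ and then $\ell(s_b s_a w^{-1}) < \ell(s_a w^{-1})$. Reading off descents via the rule $\ell(ws_i)=\ell(w)-1 \iff w(i)>w(i+1)$ and passing to $w$ (working with $w$ rather than $w^{-1}$, using left descents of $w^{-1}$ = right-inverse statements), one finds that the applicability of \eqref{aff2}/\eqref{aff3} at the front of a word for $w^{-1}$ corresponds exactly to the window of $w^{-1}$ containing, among its first three relevant entries, a pattern permitting the swap of $a$ with $b$; concretely, if the positions of $a-1,a,a+1$ (or the relevant triple of consecutive residues) appear in $w^{-1}$'s window as a decreasing run $c,b,a$, then \eqref{aff3} lets us pass to the word for the element whose window has $c,a,b$, and \eqref{aff2} lets us pass to $b,c,a$. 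So the three windows $[\dots,c,b,a,\dots]$, $[\dots,c,a,b,\dots]$, $[\dots,b,c,a,\dots]$ with $a<b<c$ are exactly the three one-line notations related by a single application of these moves. I would verify this correspondence carefully in both directions: that each instance of \eqref{aff3}/\eqref{aff2} between reduced words of $u^{-1},v^{-1}$ produces windows in this relation, and conversely that if two elements of $\cB(z)$ have windows differing by such a three-term swap, then suitable reduced words of their inverses differ by \eqref{aff2} (using that $(c_1,\dots,c_k)$ can be chosen so that $w^{-1}(a)<w^{-1}(a+1)$ and $w^{-1}(b)<w^{-1}(b+1)$, which is precisely the condition that after the swapped prefix the remaining factor avoids those descents).

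Then I would handle the base point. The element $\alpha_{\min}(z)$ is defined so that its inverse has window $[z(a_1),a_1,z(a_2),a_2,\dots,z(a_l),a_l]$ with duplicates removed, where $a_1<\dots<a_l$ are the indices in $[n]$ with $a_i\le z(a_i)$. I would check that $\alpha_{\min}(z)^{-1}\circ\alpha_{\min}(z) = z$ by a direct Demazure-product computation, or better, by identifying $\alpha_{\min}(z)$ with a minimal-length Hecke atom — the window is an explicit "greedy" arrangement interleaving the two-cycles of $z$, and the self-inverse involution word one reads from it is the canonical one constructed from the cycle structure of $z$ via \eqref{oo-eq}; since $z\in\I_*(W)$ with $*=\id$ has $z = (w^{-1})^*\circ w = w^{-1}\circ w$ for $w=\alpha_{\min}(z)$, membership in $\cB(z)$ follows. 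This verification, together with the already-established fact that $\cB(z)$ is a single $\approx$-class, gives the proposition once the dictionary of the previous paragraph is in place.

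The main obstacle I anticipate is the dictionary step: matching \eqref{aff1}, \eqref{aff2}, \eqref{aff3} with the three-term window relation requires careful bookkeeping about which residues $a,b,c$ and which window positions are involved, and about the interaction between "prefix" conditions on reduced words (the form $(a,b,c_1,\dots,c_k)$ with $w^{-1}(a)<w^{-1}(a+1)$, etc.) and descent statistics of $w^{-1}$. In particular one must be attentive to the affine wraparound (residues are taken mod $n$, but the integers $a<b<c$ in the window relation are genuine integers spanning possibly several windows) and to the fact that \eqref{aff3} is derived from \eqref{aff1}+\eqref{aff2} rather than being an independent generator, so the correspondence is cleanest when phrased via \eqref{aff2}. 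I would also double-check that no extra relations are needed — i.e. that the pure braid relation (the first clause of $\equivA$) does not change the window at all, which is clear since it is an ordinary braid move internal to some $\cR(w)$, and hence contributes nothing to $\approx$. Once these points are settled, assembling the proof is routine: $\cB(z)$ is a single $\approx$-class by the corollary, $\approx$ equals the transitive closure of the three-term window relation by the dictionary, and $\alpha_{\min}(z)\in\cB(z)$ by the base-point computation.
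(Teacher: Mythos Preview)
Your approach is essentially the paper's: use the corollary that $\cHinvred(z)$ is a single $\equivA$-class, translate the element-changing generators of $\equivA$ into the three-term window relation on inverses, and verify $\alpha_{\min}(z)\in\cB(z)$. The paper's proof is terse (it cites an external reference \cite[Prop.~6.8]{M2019} for $\alpha_{\min}(z)\in\cB(z)$ rather than verifying it directly, and asserts the dictionary in one line), but the skeleton is the same.

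One correction: you say ``the relations \eqref{aff1} and \eqref{aff2}\dots are the ones that carry one $\cR(w)$ to another,'' but \eqref{aff1} is the ordinary type-$\mathsf{A}$ braid relation $(\dash,a,b,a,\dash)\equivA(\dash,b,a,b,\dash)$ and stays inside a single $\cR(w)$, exactly like the commuting relation. The only generator that changes the underlying element is \eqref{aff2} (and its consequence \eqref{aff3}). This matters for your dictionary: you should match the three windows $[\dash,c,b,a,\dash]$, $[\dash,c,a,b,\dash]$, $[\dash,b,c,a,\dash]$ of $u^{-1},v^{-1},w^{-1}$ to the three elements $s_i s_j s_i \omega$, $s_i s_j \omega$, $s_j s_i \omega$ (with $|i-j|=1$ and $\omega$ having neither $s_i$ nor $s_j$ as a left descent), related pairwise by \eqref{aff2} and \eqref{aff3}. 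Your paragraph doing this is somewhat tangled (you pass between reduced words for $w$ and for $w^{-1}$ mid-computation, and your attribution of which relation gives which window is swapped), but the computation is straightforward once you fix the convention that words in $\iH(z)$ are reduced words for elements of $\cB(z)$, and read the window of the \emph{inverse}.
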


\begin{proof}
We have $\alpha_{\min}(z) \in \cB(z)$ by \cite[Prop. 6.8]{M2019},
and $u,v,w \in \tS_n$ have
$ u^{-1}= [\dash, c,b,a,\dash]$, $v^{-1}=  [\dash, c,a,b,\dash]$, and $w^{-1}=  [\dash, b,c,a,\dash]$
for some $a<b<c$
if and only if $u$, $v$, and $w$ have reduced words related as in \eqref{aff2} or \eqref{aff3}.
The result then follows by the word property  for $\tS_n$ via
\eqref{iH-eq}.
\end{proof}

\end{document}